\documentclass[11pt]{article}
\usepackage{amsmath,amssymb,amsthm}
\usepackage{graphicx,subfigure,float,url}
\usepackage{pdfsync}
\usepackage[english]{babel}
\usepackage[utf8]{inputenc}
\usepackage{tipa}
\usepackage{tikz}

\newlength\imagewidth
\newlength\imagescale

\usepackage{bbold}
\usepackage{bbm}

\usepackage{fancybox}
\usepackage{listings}
\usepackage{mathtools}
\usepackage{amsfonts}
\usepackage[bottom]{footmisc}
\usepackage{verbatim}

\usepackage{colortbl}

\usepackage{float}
\usepackage{makeidx}


\newcommand{\deriv}{\frac{d}{dt}}

\setlength{\textwidth}{16cm}

\setlength{\textheight}{22.5cm}

\setlength{\topmargin}{-1.5cm}

\setlength{\oddsidemargin}{-1mm}

\setlength{\evensidemargin}{-1mm}

\setlength{\abovedisplayskip}{3mm}

\setlength{\belowdisplayskip}{3mm}

\setlength{\abovedisplayshortskip}{0mm}

\setlength{\belowdisplayshortskip}{2mm}

\setlength{\baselineskip}{12pt}

\setlength{\normalbaselineskip}{12pt}

\normalbaselines

\pagestyle{plain}

\raggedbottom

\newtheorem{Theorem}{Theorem}[section]

\newtheorem{Definition}[Theorem]{Definition}

\newtheorem{Lemma}[Theorem]{Lemma}

\newtheorem{Corollary}[Theorem]{Corollary}

\newtheorem{Remark}[Theorem]{Remark}

\newcommand{\RR}{{{\rm I} \kern -.15em {\rm R} }}

\newcommand{\C}{{{\rm l} \kern -.50em {\rm C} }}

\newcommand{\nat}{{{\rm I} \kern -.15em {\rm N} }}

\newcommand{\tr}{\tilde{R}}

\newcommand{\be}{\begin{equation}}
\newcommand{\ee}{\end{equation}}
\newcommand{\beq}{\begin{eqnarray}}
\newcommand{\eeq}{\end{eqnarray}}
\newcommand{\beqs}{\begin{eqnarray*}}
\newcommand{\eeqs}{\end{eqnarray*}}
\newcommand{\bt}{\begin{Theorem}}
\newcommand{\et}{\end{Theorem}}
\newcommand{\br}{\begin{Remark}}
\newcommand{\er}{\end{Remark}}
\newcommand{\bc}{\begin{Corollary}}
\newcommand{\ec}{\end{Corollary}}
\newcommand{\bl}{\begin{Lemma}}
\newcommand{\el}{\end{Lemma}}
\newcommand{\bd}{\begin{definition}}
\newcommand{\ed}{\end{definition}}

\renewcommand{\geq}{\geqslant}
\renewcommand{\leq}{\leqslant}

\graphicspath{{figures/}}

\title{On the control of the Hegselmann-Krause model \\ with leadership and time delay}
\author{
\and
Alessandro Paolucci\footnote{Dipartimento di Ingegneria e Scienze dell'Informazione e Matematica, Universit\`{a} di L'Aquila, Via Vetoio, Loc. Coppito, 67010 L'Aquila Italy (\texttt{alessandro.paolucci2@graduate.univaq.it}).}
\and
Cristina Pignotti\footnote{Dipartimento di Ingegneria e Scienze dell'Informazione e Matematica, Universit\`{a} di L'Aquila, Via Vetoio, Loc. Coppito, 67010 L'Aquila Italy (\texttt{cristina.pignotti@univaq.it}).}
}

\date{}

\begin{document}

\textwidth=160 mm

\textheight=225mm

\parindent=8mm

\frenchspacing

\maketitle

\begin{abstract}
We analyze Hegselmann-Krause opinion formation models with leadership in presence of time delay effects. In particular, we consider a model with pointwise time variable time delay and a model with a distributed delay. In both cases we show that, when  the delays satisfy suitable smallness conditions, then the leader can control the system, leading the group to any prefixed state. Some numerical tests illustrate our analytical results.
\end{abstract}

\vspace{5 mm}

\def\qed{\hbox{\hskip 6pt\vrule width6pt
height7pt
depth1pt  \hskip1pt}\bigskip}

\section{Introduction}
In recent years, multi-agent models have attracted the interest of several researchers in different scientific areas, such as biology, economics, robotics, control theory, social sciences (see e.g. \cite{Bullo, Cama, CCR, CCP, Carrillo, CS1, Desai, Jad, Marsan, MT, Sole, Stro, XWX}). An important feature analyzed is the possible emergence of self-organization leading to globally collective behaviours.

Specifically, it is important to investigate as a group of agents can reach a consensus starting from different opinions or, for instance, as the opinion of a leader can influence other individuals. We can observe more and more as in the real life social networks can contribute to condition the behavior of the individuals and their opinions. For this reason, many political groups and statesmen try to diffuse their opinions and ideas through social media. Therefore, for the importance of applications in social sciences, among multi-agent systems a key role is played by the so-called opinion formation models.

In this paper, we will focus on a Hegselmann-Krause type opinion formation model. The idea is that each opinion changes according with the opinions of other agents taking place in the bounded domain of confidence. The original model was proposed in  \cite{HK} (see also \cite{BHT}). Then, a number of variants and generalizations have been introduced (see e.g. \cite{Aydogdu, Bellomo, BN, CFT, CFT2, Castellano, CF, During, JM, Piccoli, Campi}).
In particular, in \cite{WCB}, a control problem has been studied for the Hegselmann-Krause model with leadership. Namely, through suitable controls, the leader influences other agents so that the whole system tends to any desired consensus state. See also \cite{D} for analogous result in the one-dimensional case.

A significant aspect that one has to consider dealing with problems from applied sciences is the natural presence of time delays.
Time delay effects frequently appear in applications and physical or biological models and it is well-known that they can induce instability phenomena. In particular, in multi-agent models, the presence of time delays makes the systems more difficult to deal with since they destroy some typical  symmetries and geometrical features.  Multi-agent models with time delay have been proposed and analyzed in recent papers dealing, e.g., with Cucker--Smale models (\cite{LW, CH, CL, CP, EHS, HM, PR, PT}), thermomechanical Cucker-Smale systems (\cite{DH}) and opinion formation models (\cite{Lu, CPP, P, H}). In these papers, under different sets of assumptions, convergence to consensus results are proved, at least for sufficiently small time delays.
As far as we know, control problems for the Hegselmann--Krause opinion formation model with delays have not yet been studied so far. So, here, our aim is to show that, in presence of a leader, a Hegselmann--Krause model with time delays  can be controlled  in order to reach any consensus state, namely any target opinion. Our results require that the time delays satisfy suitable smallness assumptions. As a concrete application one can think to the case of social networks, through which the leader can inform and condition other agents in very short times, namely with a small time delay.

 Let $N\in\nat$ be the number of agents and let $x_i:[0,+\infty)\to \RR^d$ be the opinion of the $i$-th agent, for any $i=1,\dots,N$. Moreover, let us consider the leader's opinion $x_0:[0,+\infty)\to \RR^d$. Then, the dynamics is governed by the following modified Hegselmann-Krause model with leadership:
\begin{equation}
\label{main_model}
\begin{array}{l}
\displaystyle{ \frac{dx_0}{dt} (t)
=u(t),}\\\medskip
\displaystyle{ \frac{dx_i}{dt}(t)=\frac{1}{N} \sum_{j\neq i} a_{ij}(t) (x_j(t-\tau(t))-x_i(t))}\\
\hspace{3
 cm}
\displaystyle{+\gamma \phi (|x_0(t-\tau(t))-x_i(t)|)(x_0(t-\tau(t))-x_i(t)),}
\end{array}
\end{equation}
for all $t\geq 0$, with continuous initial data, for $i=1, \dots, N,$
\begin{equation}
\label{initial_condition}
\displaystyle{ x_0(s)=x_{0,0}(s)}, \quad
\displaystyle{ x_i(s)=x_{i,0}(s)}, \quad \forall\ s\in[-\bar\tau,0].
\end{equation}
Here, the control $u:[0,+\infty)\to \RR^d$ is a measurable function, which represents the result of the leader's opinion and $\tau(\cdot):[0,+\infty)\to[0,+\infty)$ is the time-dependent time delay.
We assume that $\tau (\cdot)$ is continuous and bounded, i.e.
$$
\tau(t)\leq \bar{\tau}, \quad t\geq 0,
$$
where $\bar\tau>0$ is the constant in \eqref{initial_condition}. The influence function is of the form $$a_{ij}(t)=a(|x_j(t-\tau(t))-x_i(t)|),$$
where $a:[0,+\infty)\to [0,+\infty)$ is a continuous non-increasing cut-off function. In particular, we assume:
$$
a(s)=1 \quad \mbox{\rm for}\ s\in [0,\delta], \quad a(s)=0 \quad \mbox{\rm for}\ s\ge r,$$
where $\delta, r \in \RR^+$, with $r>\delta.$ Moreover, $\gamma$ is a positive parameter representing  the strength of the leader's opinion and $\phi(\cdot):[0,+\infty)\to \RR^+$ is the leader's influence function on the agents. We assume $\phi$ non-increasing and  Lipschitz continuous, with Lipschitz constant $L>0$. Without loss of generality, we can take $\phi(0)=1$.

Furthermore, we consider a second system with distributed time-varying time delay. In this case, each agent is influenced by other agents' opinions along an entire time interval $[t-\tau(t),t]$. This generalizes the dynamics of the model \eqref{main_model}, in which agents are influenced only according to the opinions at a previous time $t-\tau(t)$, for any $t\geq 0$. In particular, our model is the following:
\begin{equation}\label{ds}
\begin{array}{l}
\displaystyle{\frac {d x_0}{dt}(t)=u(t),}\\\medskip
\displaystyle{\frac{d x_i}{dt}(t)=\frac{1}{Nh(t)}\sum_{j\neq i}\int_{t-\tau(t)}^t \beta(t-s) a_{ij}(t;s) (x_j(s)-x_i(t))ds}\\
\hspace{3 cm}
\displaystyle{+\frac{\gamma}{h(t)} \int_{t-\tau(t)}^t \beta(t-s)\phi(|x_0(s)-x_i(t)|)(x_0(s)-x_i(t))ds,}
\end{array}
\end{equation}
with continuous initial data, for $i=1, \dots, N,$
\begin{equation}\label{initial_data_dd}
x_i(s)=x_{i,0}(s),\quad
x_0(s)=x_{0,0}(s), \quad \forall\  s\in[-\bar\tau, 0].
\end{equation}
Here, $a_{ij}(t;s):=a(|x_j(s)-x_i(t)|)$ and $\phi$ satisfy previous assumptions. Moreover, $\tau(\cdot):[0,+\infty)\to \RR^+$ is the time-dependent time delay, which we suppose continuous and  bounded. In particular, we assume that there exist two constants $\bar\tau, \tau_*>0$ such that $$\tau_*\leq \tau(t)\leq \bar\tau$$ for any $t\geq 0$. Furthermore, $\beta(\cdot):[0,\bar\tau]\to\RR^+$ is a weight function satisfying
\begin{equation}\label{htau}
\int_0^{\tau_*} \beta(s)ds> 0.
\end{equation}
Finally, $h(\cdot)$ is defined as
$$
h(t):=\int_0^{\tau(t)} \beta(s)ds,
$$
for any $t\geq 0.$ Then, it is always strictly positive by assumption \eqref{htau}.

We notice that if $\beta(s)=\delta_{\tau(t)}(s)$, then system \eqref{ds} can be rewritten as in \eqref{main_model}.

Distributed time-delay systems have been analyzed very recently in \cite{CPP, P, LLW}, in which the authors improved and generalized the theory of delayed Cucker-Smale and Hegselmann-Krause models with pointwise delay.

A well-posedness result for systems \eqref{main_model}-\eqref{initial_condition} or \eqref{ds}-\eqref{initial_data_dd} with continuous initial data can be found in \cite{HL} (see also \cite{Halanay}). Here, we are interested in the  consensus behavior of the solutions subject to leader's control strategies.

\begin{Definition}
We say that a measurable control $t\mapsto u(t)$ is \textbf{admissible} if there exists a constant $M>0$ such that $||u||_{L^\infty} \leq M$.
\end{Definition}

\begin{Definition}
Let $\{x_i\}_{i=0,1,\dots,N}$ be a solution to either \eqref{main_model}-\eqref{initial_condition} or \eqref{ds}-\eqref{initial_data_dd}. We say that the solution converges to \textbf{consensus} if and only if
$$
\lim_{t\to +\infty} |x_i(t)-x_j(t)|=0,
$$
for any $i,j\in\{0,1,\dots, N\}.$
\end{Definition}
The rest of the paper is organized as follows. In Section \ref{section_consensus} we show the existence of a control function $u$ which modifies the leader's opinion so that consensus for system \eqref{main_model} is achieved. In Section \ref{section_2}, we prove the local controllability property, namely under a smallness assumption on the time delay and the parameters involved in the dynamics we will be able to find a control which steers all agents to a given state. Sections \ref{sec_distr} and \ref{sec5} are devoted to consensus and controllability properties for solutions to system \eqref{ds}. Finally, in Section \ref{numeric} some numerical results are shown to illustrate the theory of previous sections.
\section{Consensus behavior}\label{section_consensus}
In this section, inspired by \cite{WCB}, we prove that there exists an admissible control $u$ so that consensus among all agents (leader included) occurs. Let us define the distance from the leader's opinion
\begin{equation}\label{dzero}
d_0(t):= \max_{i\in \{1,\dots,N\}} |x_i(t)-x_0(t)|,\quad t\ge -\bar\tau ,
\end{equation}
and let $p=p(t)\in \{1,...,N\}$ be such that
$$
|x_p(t)-x_0(t)|=d_0(t).
$$
Without loss of generality, we can take $p=1$. Furthermore, consider the
control
\begin{equation}
\label{control}
u(t)=\gamma \alpha(t)\sum_{j=1}^N \phi(|x_j(t-\tau(t))-x_0(t)|)(x_j(t-\tau(t))-x_0(t)),
\end{equation}
where
$$
\alpha(t):= \frac{1}{2}\min \left\{ \frac{\phi(|x_1(t-\tau(t))-x_0(t)|)}{N}, \frac{2M}{\gamma \sum_{j= 1}^N |x_j(t-\tau(t))-x_0(t)|}   \right\} .
$$
By definition, $u$ is an admissible control. Moreover, we have the following lemma.
\begin{Lemma}\label{lemma1}
Let $\{x_i\}_{i=0,1,\dots,N}$ be the solution to \eqref{main_model}-\eqref{initial_condition}, with control as in \eqref{control}. Let $x_{0,0}$, $x_{i,0}:[-\bar\tau,0]\to \RR^d$ be continuous initial data and let
$$
R:=\max_{s\in[-\bar\tau,0]}\max_{i=0,1,\dots,N} |x_i(s)|.
$$
Then,
\begin{equation}\label{tesi_lemma1}
\max_{i\in\{ 0,1,...,N\}} |x_i(t)|\leq R,
\end{equation}
for any $i\in\{1,\dots,N\}$ and for any $t\geq 0$.
\end{Lemma}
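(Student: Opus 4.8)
The plan is to prove \eqref{tesi_lemma1} by a maximum-principle argument in the Euclidean norm, exploiting that every coefficient in \eqref{main_model} and in the control \eqref{control} is nonnegative, so that each velocity is an attractive pull toward points whose norm we can bound by $R$. I set $\rho(t):=\max_{i\in\{0,1,\dots,N\}}|x_i(t)|$, a continuous function as a finite maximum of continuous maps, and note that \eqref{initial_condition} gives $\rho(s)\le R$ for every $s\in[-\bar\tau,0]$. Fixing $\epsilon>0$ I introduce the comparison barrier $g(t):=R+\epsilon t$ and argue by contradiction: if $\rho(t_1)>R$ for some $t_1>0$, then for $\epsilon$ small enough $g(0)=R\ge\rho(0)$ while $g(t_1)<\rho(t_1)$, so $\rho$ must cross $g$, and I let $t^*:=\inf\{t\ge 0:\rho(t)>g(t)\}$. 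By continuity $\rho(t^*)=g(t^*)=:\rho_0>0$, while $\rho(t)\le g(t)$ for all $t\le t^*$; since $g$ is increasing and the initial bound applies when $t^*-\tau(t^*)<0$, every delayed value satisfies $|x_j(t^*-\tau(t^*))|\le\rho_0$ and likewise $|x_0(t^*-\tau(t^*))|\le\rho_0$.

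I would then estimate the right derivative of $|x_i(t)|^2$ at $t^*$ for an index $i$ realizing the maximum, separating the two cases. Writing $\tfrac12\frac{d}{dt}|x_i|^2=\langle x_i,\dot x_i\rangle$ and using the Cauchy--Schwarz bound $\langle x_i,\,y-x_i\rangle\le|x_i|\,(|y|-|x_i|)$, an agent $i\in\{1,\dots,N\}$ gives each summand $\le|x_i|\,(|x_j(t^*-\tau)|-|x_i|)=\rho_0(|x_j(t^*-\tau)|-\rho_0)\le 0$, and the leader term is bounded the same way, because $|x_i(t^*)|=\rho_0$, all delayed norms are $\le\rho_0$, and the coefficients $a_{ij}(t^*)$ and $\gamma\phi(\cdot)$ are nonnegative; hence $\tfrac{d}{dt}|x_i|^2\big|_{t^*}\le 0$. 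For the leader $i=0$ I insert \eqref{control}, so that $\langle x_0,u\rangle=\gamma\alpha(t^*)\sum_{j}\phi(\cdot)\langle x_0,x_j(t^*-\tau)-x_0\rangle\le 0$ by the same inequality together with $\alpha(t^*)\ge 0$ and $\phi\ge 0$. Thus the maximizing norm has nonpositive right derivative at $t^*$.

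Finally I would convert this into the contradiction. By a standard property of the upper right Dini derivative of a finite maximum, $D^+\rho(t^*)\le\max\{\tfrac{d}{dt}|x_i(t^*)|:\ |x_i(t^*)|=\rho_0\}\le 0$, whereas the definition of $t^*$ as a first crossing of the strictly increasing barrier $g$ forces $D^+\rho(t^*)\ge g'(t^*)=\epsilon>0$, a contradiction; therefore $\rho(t)\le R$ for all $t\ge 0$, which is exactly \eqref{tesi_lemma1}. I expect the only delicate points to be bookkeeping rather than analysis: the non-smoothness of $\rho$ as a maximum, which the Dini-derivative comparison and the $\epsilon$-barrier handle cleanly, and the measurability of a general admissible control, which is not an issue here since the explicit $u$ in \eqref{control} is continuous and the trajectories are consequently $C^1$. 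The time delay enters only through the elementary bound $|x_j(t^*-\tau(t^*))|\le\rho_0$ guaranteed by monotonicity of $g$, so no smallness assumption on $\tau$ is needed for this boundedness step.
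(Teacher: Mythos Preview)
Your argument is correct and shares the same skeleton as the paper's: an $\epsilon$-perturbation of $R$, a contradiction at a first escape time, and the core estimate $\langle x_i,\,y-x_i\rangle\le |x_i|\,(|y|-|x_i|)$ from Cauchy--Schwarz combined with the nonnegativity of $a_{ij}$, $\phi$, $\gamma$ and $\alpha$. The execution differs only in the contradiction mechanism. The paper uses a \emph{constant} barrier $R_\epsilon=R+\epsilon$, derives for each agent the scalar inequality $\frac{d}{dt}|x_i|\le (\gamma+1)(R_\epsilon-|x_i|)$ on $[0,T_\epsilon)$ (and the analogous one for $x_0$), integrates via Gronwall to obtain the strict bound $|x_i(t)|<R_\epsilon$ up to $T_\epsilon$, and contradicts $\lim_{t\to T_\epsilon^-}\max_i|x_i(t)|=R_\epsilon$. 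You instead take a \emph{sloped} barrier $g(t)=R+\epsilon t$ and work directly with the upper Dini derivative of $\rho=\max_i|x_i|$ at the first crossing, which is slightly more elementary since it needs only pointwise sign information at $t^*$ and avoids Gronwall entirely. Both variants are valid; the paper's route gives an explicit exponential envelope for each $|x_i|$ as a by-product, while yours is shorter and handles all indices at once through $\rho$.
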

\begin{proof}
Let $\epsilon>0$. Define $R_\epsilon:=R+\epsilon$ and set
$$
S_\epsilon :=\{ t>0\ : \ \max_{i\in\{ 0,1,...,N\}} |x_i(s)|<R_\epsilon, \ \forall\ s\in[0,t)\}.
$$
By continuity, $S_\epsilon\neq \emptyset$. Let $T_\epsilon:=\sup S_\epsilon$ and suppose by contradiction that $T_\epsilon<+\infty$. Then, we have that
\begin{equation}\label{contr1}
\lim_{t\to T_\epsilon -} \max_{i\in\{ 0,1,...,N\}} |x_i(t)|=R_\epsilon.
\end{equation}
On the other hand, we have that for any $i\in\{1,...,N\}$ and for any $t<T_\epsilon$,
$$
\begin{array}{l}
\displaystyle{\frac 1 2 \frac{d}{dt}|x_i(t)|^2=\left\langle x_i(t),\frac{dx_i}{dt}(t) \right\rangle}\\
\hspace{3 cm}
\displaystyle{=\frac 1 N \sum_{j\neq i} a_{ij}(t) \langle x_j(t-\tau(t))-x_i(t),x_i(t)\rangle}\\
\hspace{5 cm}
\displaystyle{+\gamma \phi (|x_0(t-\tau(t))-x_i(t)|)\langle x_0(t-\tau(t))-x_i(t),x_i(t)\rangle}\\
\hspace{3 cm}
\displaystyle{= \frac 1 N \sum_{j\neq i}a_{ij}(t) \left( \langle x_j(t-\tau(t)),x_i(t)\rangle -|x_i(t)|^2\right)}\\
\hspace{5 cm}
\displaystyle{+\gamma \phi(|x_0(t-\tau(t))-x_i(t)|)\left( \langle x_0(t-\tau(t)),x_i(t)\rangle-|x_i(t)|^2\right) .}
\end{array}
$$
By using Cauchy-Schwarz inequality and the fact that $t-\tau(t)\in S_\epsilon$ yields
$$
\begin{array}{l}
\displaystyle{\frac 1 2 \frac{d}{dt}|x_i(t)|^2\leq \frac 1 N \sum_{j\neq i} a_{ij}(t) |x_i(t)|\left( R_\epsilon-|x_i(t)|\right)+\gamma \phi(|x_0(t-\tau(t))-x_i(t)|)|x_i(t)|\left( R_\epsilon-|x_i(t)|\right).}
\end{array}
$$
Since $|x_i(t)|<R_\epsilon$, then
$$
 \frac{d}{dt}|x_i(t)| \leq (\gamma+1)(R_\epsilon-|x_i(t)|),
$$
and by using Gronwall estimate we get
\begin{equation}\label{eq_per_contr1}
|x_i(t)|\leq e^{-(\gamma+1)t} \left( |x_i(0)|-R_\epsilon\right)+R_\epsilon <R_\epsilon, \quad\forall\ t<T_\epsilon.
\end{equation}
Moreover, if we consider the leader's opinion with control $u$ defined in \eqref{control}, we have that
$$
\begin{array}{l}
\displaystyle{ \frac 1 2 \frac{d}{dt}|x_0(t)|^2 =\gamma \alpha(t)\sum_{j=1}^N \phi(|x_j(t-\tau(t))-x_0(t)|)\left( \langle x_j(t-\tau(t)),x_0(t)\rangle -|x_0(t)|^2\right) .}
\end{array}
$$
By Cauchy-Schwarz inequality and using the fact that for any $j\in\{1,...,N\}$ and $t\geq 0$, $|x_j(t-\tau)|\leq R_\epsilon$, we obtain
$$
\begin{array}{l}
\displaystyle{ \frac 1 2 \frac{d}{dt}|x_0(t)|^2  \leq \gamma \alpha(t)\sum_{j=1}^N \phi(|x_j(t-\tau(t))-x_0(t)|)|x_0(t)|\left( R_\epsilon-|x_0(t)|\right)}\\
\hspace{2 cm}
\displaystyle{ \leq \gamma |x_0(t)|\left( R_\epsilon -|x_0(t)|\right),}
\end{array}
$$
where we have used the fact that for any $t\leq T_\epsilon$, $|x_0(t)|\leq R_\epsilon$ and $N\alpha(t)\leq 1$. Therefore, using once again Gronwall estimate yields
\begin{equation}\label{eq_per_contr2}
\begin{array}{l}
\displaystyle{|x_0(t)|\leq e^{-\gamma t}\left( |x_0(0)|-R_\epsilon\right)+R_\epsilon<R_\epsilon.}
\end{array}
\end{equation}
Therefore, by inequalities \eqref{eq_per_contr1}-\eqref{eq_per_contr2} we can conclude that
$$
\lim_{t\to T_\epsilon -} \max_{i\in\{ 0,1,...,N\}} |x_i(t)|< R_\epsilon.
$$
This is in contradiction with \eqref{contr1}. Then, $T_\epsilon=+\infty$. By sending $\epsilon\to 0$, we obtain the thesis.
\end{proof}
Lemma \ref{lemma1} guarantees not only global existence of solution to \eqref{main_model} with $u$ as in \eqref{control}, but also a control from below of the influence function $\phi$. Indeed, in this case, since $|x_0(t-\tau(t))-x_i(t)|\leq 2R$ and $\phi$ is non-increasing, we have that
\begin{equation}\label{below}
\phi(|x_0(t-\tau(t))-x_i(t)|)\geq \phi(2R),
\end{equation}
for any $t\geq 0$ and for any $i\in\{1,...,N\}$.

\begin{Remark}\label{Rstar}{\rm
Since the system is invariant by translation one could improve \eqref{below} by considering the relative distance among the opinions. Indeed, arguing as in Lemma 2.2 of \cite{CPP}, we can deduce
$$
\phi(|x_0(t-\tau(t))-x_i(t)|)\geq \phi(2R^*),$$
with
$$R^*:=\min_{C\in\RR^d} \,\max_{[-\tau(0),0]}\,\max_{1\le i\le N}\vert x_i(t)-C\vert\,.$$
Then, all the following results hold true also with  $R^*$ in place of $R.$  }
\end{Remark}

Note that the function $d_0(t)$ defined in \eqref{dzero} may be not differentiable for some $t>0$. Therefore, we will use the upper Dini derivative. The upper Dini derivative of a continuous function $F(\cdot)$ is defined as
$$
D^+F(t):=\limsup_{h\to 0+} \frac{F(t+h)-F(t)}{h}.
$$
Before proving the consensus result, we need the following lemma.
\begin{Lemma}\label{lemma2}
Let $\{x_i\}_{i=0,1,...,N}$ be the solution to \eqref{main_model}-\eqref{initial_condition}, with control as in \eqref{control}. Define
$$
\sigma_\tau(t):=\int_{t-\bar\tau}^t \left[ \max_{j\in\{1,...,N\}}\left\vert \frac{d{x}_j}{ds}(s)\right\vert +\vert u(s)\vert \right] ds, \quad \forall\ t\ge \bar\tau .
$$
Then, for any $t\geq \bar{\tau},$
\begin{equation}\label{tesi_lemma2}
D^+ d_0(t) \leq -\frac \gamma 2 \phi(2R) d_0(t)+R_\gamma \sigma_\tau(t),
\end{equation}
where $R_\gamma:=\gamma LR+\gamma+1$.
\end{Lemma}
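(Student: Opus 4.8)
The plan is to differentiate $d_0$ along the trajectories. Since $d_0(t)=\max_{1\leq i\leq N}|x_i(t)-x_0(t)|$ is a maximum of finitely many functions that are smooth away from coincidence points, the standard differentiation rule for maxima lets us, at each fixed $t$, pass to an index realizing the maximum (relabelled as $i=1$, as already done) and obtain, whenever $d_0(t)>0$,
\[
D^+d_0(t)\leq \frac{1}{d_0(t)}\Big\langle x_1(t)-x_0(t),\ \frac{dx_1}{dt}(t)-\frac{dx_0}{dt}(t)\Big\rangle ,
\]
the case $d_0(t)=0$ being trivial. Writing $e_i:=x_i-x_0$, so that $d_0=|e_1|$ and $|e_i|\leq|e_1|$, the whole proof reduces to estimating $\langle e_1,\frac{dx_1}{dt}-\frac{dx_0}{dt}\rangle$ after inserting the dynamics \eqref{main_model} and the control \eqref{control}, and finally dividing by $|e_1|$. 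Throughout we restrict to $t\geq\bar\tau$, so that every delayed time $t-\tau(t)$ is nonnegative and Lemma \ref{lemma1} applies.

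I would split $\frac{dx_1}{dt}-\frac{dx_0}{dt}$ into three groups: the agent--agent interaction $\frac1N\sum_{j\neq1}a_{1j}(t)(x_j(t-\tau(t))-x_1(t))$, the leader attraction $\gamma\phi(|x_0(t-\tau(t))-x_1(t)|)(x_0(t-\tau(t))-x_1(t))$, and $-u(t)$. In each term I replace a delayed position $x_k(t-\tau(t))$ by $x_k(t)$ plus the remainder $-\int_{t-\tau(t)}^t\frac{dx_k}{ds}$, and a value of $\phi$ at a delayed argument by its value at the corresponding undelayed argument plus a Lipschitz remainder. The crucial bookkeeping is that $\sigma_\tau(t)$ bundles $\max_j|\frac{dx_j}{ds}|$ and $|u|$ with unit weights: since $|x_j(t-\tau(t))-x_j(t)|\leq\int_{t-\bar\tau}^t\max_k|\frac{dx_k}{ds}|$ and $|x_0(t-\tau(t))-x_0(t)|\leq\int_{t-\bar\tau}^t|u|$, the sum of any such position remainders is controlled by $\sigma_\tau(t)$.

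The undelayed main parts produce the dissipation. Using $|e_j|\leq|e_1|$ and Cauchy--Schwarz, the interaction part pairs with $e_1$ as $\frac1N\sum_{j\neq1}a_{1j}\big(\langle e_j,e_1\rangle-|e_1|^2\big)\leq0$; the leader attraction contributes $-\gamma\phi(|x_0(t-\tau(t))-x_1(t)|)|e_1|^2$; and in $-u$ the $j=1$ summand is $-\gamma\alpha\phi(|e_1|)|e_1|^2\leq0$ while the summands $j\neq1$ are, in absolute value, at most $\gamma\alpha(N-1)|e_1|^2$. Here is the heart of the matter: by the very definition of $\alpha(t)$ one has $N\alpha(t)\leq\frac12\phi(|x_1(t-\tau(t))-x_0(t)|)$, so the destabilizing cross-terms of the control are bounded by $\frac\gamma2\phi(|x_1(t-\tau(t))-x_0(t)|)|e_1|^2$, exactly half of the (favorable) leader-attraction coefficient. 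Combining these two, using $|\phi(|x_1(t-\tau(t))-x_0(t)|)-\phi(|x_0(t-\tau(t))-x_1(t)|)|\leq L\sigma_\tau(t)$ to align the two arguments of $\phi$, and then invoking \eqref{below} together with $|e_1|\leq2R$, the main part is bounded above by $-\frac\gamma2\phi(2R)|e_1|^2$.

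It remains to gather the remainders, each of the form (constant)$\times\sigma_\tau(t)\,|e_1|$, and to check that the constants add up to $R_\gamma$. The interaction position-remainder carries the factor $\frac1N\sum_{j\neq1}a_{1j}\leq\frac{N-1}{N}<1$ and yields the summand $1$; the position-remainders from $x_0(t-\tau(t))$ in the attraction (factor $\gamma$, since $\phi\leq1$) and from $x_j(t-\tau(t))$ in $u$ (factor $\gamma N\alpha\leq\frac\gamma2$) together yield the summand $\gamma$; and the $\phi$-Lipschitz remainder, of size $\frac{\gamma L}{2}\sigma_\tau(t)|e_1|^2$, gives $\gamma LR\,\sigma_\tau(t)$ once divided by $|e_1|$ and bounded via $|e_1|\leq2R$. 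Dividing the resulting estimate by $d_0(t)=|e_1|$ then produces exactly \eqref{tesi_lemma2}. I expect the main obstacle to be this final balancing: one must use the built-in smallness of $\alpha$ so that the control cross-terms are absorbed by half of the attraction, and simultaneously keep the delay and Lipschitz remainders tight enough---exploiting that $\sigma_\tau$ weights $\max_j|\frac{dx_j}{ds}|$ and $|u|$ equally---so that the constant is precisely $R_\gamma=\gamma LR+\gamma+1$ and not larger.
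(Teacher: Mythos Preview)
Your proposal is correct and follows essentially the same strategy as the paper's proof: split off delay remainders bounded by $\sigma_\tau$, use $N\alpha(t)\leq\tfrac12\phi(|x_1(t-\tau(t))-x_0(t)|)$ to absorb the destabilizing control terms into half of the leader attraction, align the two $\phi$--arguments via Lipschitz continuity, and invoke \eqref{below} and $|e_1|\leq 2R$. The only organizational difference is that the paper couples the interaction and control ``main parts'' through the quantities $b_j=\bigl|\tfrac1N a_{1j}-\gamma\alpha\phi\bigr|-\tfrac1N a_{1j}$ before estimating, whereas you discard the interaction term directly as $\leq 0$ and bound the control cross-terms separately; both routes yield the same constants.
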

\begin{proof} As before, without loss of generality, we assume that
$d_0(t)= \vert x_1(t)-x_0(t)\vert.$
From \eqref{main_model} and \eqref{control} we have that, for any $t\geq 0,$
$$
\begin{array}{l}
\displaystyle{\frac 1 2 D^+|x_1(t)-x_0(t)|^2\leq \frac 1 N \sum_{j\neq 1} a_{1j}(t)\langle x_j(t-\tau(t))-x_1(t),x_1(t)-x_0(t)\rangle }\\
\hspace{3.75 cm}
\displaystyle{+\gamma \phi(|x_0(t-\tau(t))-x_1(t)|)\langle x_0(t-\tau(t))-x_1(t),x_1(t)-x_0(t)\rangle }\\
\hspace{3.75 cm}
\displaystyle{-\gamma \alpha(t)\sum_{j=1}^N \phi(|x_j(t-\tau(t))-x_0(t)|)\langle x_j(t-\tau(t))-x_0(t),x_1(t)-x_0(t)\rangle}\\
\hspace{3.5 cm}
\displaystyle{=\frac 1 N \sum_{j\neq 1} a_{1j}(t)\langle x_j(t-\tau(t))-x_j(t),x_1(t)-x_0(t)\rangle}\\
\hspace{3.75 cm}
\displaystyle{ +\frac 1 N \sum_{j=1}^N a_{1j}(t) \langle x_j(t)-x_1(t),x_1(t)-x_0(t)\rangle}\\
\hspace{3.75 cm}
\displaystyle{+\gamma \phi(|x_0(t-\tau(t))-x_1(t)|)\langle x_0(t-\tau(t))-x_0(t),x_1(t)-x_0(t)\rangle}\\
\hspace{3.75 cm}
\displaystyle{ -\gamma\phi(|x_0(t-\tau(t))-x_1(t)|)|x_1(t)-x_0(t)|^2}\\
\hspace{3.75 cm}
\displaystyle{ -\gamma \alpha(t)\sum_{j=1}^N \phi(|x_j(t-\tau(t))-x_0(t)|)\langle x_j(t-\tau(t))-x_j(t),x_1(t)-x_0(t)\rangle}\\
\hspace{3.75 cm}
\displaystyle{-\gamma\alpha(t)\sum_{j=1}^N \phi(|x_j(t-\tau(t))-x_0(t)|)\langle x_j(t)-x_0(t),x_1(t)-x_0(t)\rangle.}
\end{array}
$$
Once again, we can write for any $j\in\{1,...,N\}$
$$
\langle x_j(t)-x_1(t),x_1(t)-x_0(t)\rangle=\langle x_j(t)-x_0(t),x_1(t)-x_0(t)\rangle -|x_1(t)-x_0(t)|^2.
$$
Moreover, for any $t\geq \bar\tau$ and for any $j\in\{1,...,N\},$
$$
|x_j(t-\tau(t))-x_j(t)|\leq \int_{t-\tau(t)}^t \left|\frac{d{x}_j}{ds}(s)\right|ds\leq \int_{t-\bar\tau}^t \max_{j\in\{1,...,N\}}\left|\frac{d{x}_j}{ds}(s)\right|ds,
$$
where we have used the positivity of the integrand function and the fact that $\tau(t)\leq \bar\tau$ for any $t\geq 0$. Analogously,
$$
|x_0(t-\tau(t))-x_0(t)|\leq \int_{t-\bar\tau}^t \left|\frac{d{x}_0}{ds}(s)\right|ds.
$$
Then, we obtain
$$
\begin{array}{l}
\displaystyle{ \frac 1 2 D^+ |x_1(t)-x_0(t)|^2\leq \frac 1 N \sum_{j=1}^N a_{1j}(t)\langle x_j(t)-x_0(t), x_1(t)-x_0(t)\rangle}\\
\hspace{3.75 cm}
\displaystyle{ +|x_1(t)-x_0(t)|\int_{t-\bar\tau}^t \max_{j\in\{1,...,N\}} \left|\frac{d{x}_j}{ds}(s)\right| ds-\frac 1 N \sum_{j=1}^N a_{1j}(t) |x_1(t)-x_0(t)|^2}\\
\hspace{3.75 cm}
\displaystyle{-\gamma \phi(|x_0(t-\tau(t))-x_1(t)|)|x_1(t)-x_0(t)|^2}\\
\hspace{3.75 cm}
\displaystyle{+\gamma |x_1(t)-x_0(t)| \int_{t-\bar\tau}^t \left|\frac{d{x}_0}{ds}(s)\right|ds}\\
\hspace{3.75 cm}
\displaystyle{+\gamma |x_1(t)-x_0(t)|\int_{t-\bar\tau}^t \max_{j\in\{1,...,N\}} \left|\frac{d{x}_j}{ds}(s)\right|ds}\\
\hspace{3.75 cm}
\displaystyle{-\gamma\alpha(t)\sum_{j=1}^N \phi(|x_j(t-\tau(t))-x_0(t)|)\langle x_j(t)-x_0(t),x_1(t)-x_0(t)\rangle.}
\end{array}
$$
Therefore, putting together the first term and the last one, and noticing that by definiton of $|x_1(t)-x_0(t)|$
$$
\langle x_j(t)-x_0(t),x_1(t)-x_0(t)\rangle \leq |x_1(t)-x_0(t)|^2,
$$
we get, for any $t\geq \bar\tau$, 
$$
\begin{array}{l}
\displaystyle{\frac 1 2 D^+|x_1(t)-x_0(t)|^2\leq \sum_{j=1}^N  \left( \left| \frac 1 N a_{1j}(t)-\gamma \alpha(t)\phi(|x_j(t-\tau(t))-x_0(t)|)\right| -\frac 1 N a_{1j}(t)\right) |x_1(t)-x_0(t)|^2}\\
\hspace{3.75 cm}
\displaystyle{ -\gamma \phi(|x_0(t-\tau(t))-x_1(t)|)|x_1(t)-x_0(t)|^2+(\gamma+1) |x_1(t)-x_0(t)|\sigma_\tau(t).}\\
\end{array}
$$
Now, let us define for any $j\in\{ 1,...,N\}$,
$$
b_j:= \left| \frac 1 N a_{1j}(t)-\gamma \alpha(t)\phi(|x_j(t-\tau(t))-x_0(t)|)\right| -\frac 1 N a_{1j}(t).
$$
If $j\in\{1,...,N\}$ is such that
$$
\frac 1 N a_{1j}(t)-\gamma \alpha(t)\phi(|x_j(t-\tau(t))-x_0(t)|)\geq 0,
$$
then, $b_j<0$. Otherwise, we have that
$$
b_j\leq \gamma\alpha(t)\phi(|x_j(t-\tau(t))-x_0(t)|),
$$
which gives us
$$
\begin{array}{l}
\displaystyle{\frac 1 2 D^+ |x_1(t)-x_0(t)|^2}\\
\hspace{2 cm}\displaystyle{
\leq \Big( \gamma \alpha(t)\sum_{j=1}^N \phi(|x_j(t-\tau(t))-x_0(t)|)-\gamma \phi(|x_0(t-\tau(t))-x_1(t)|)\Big) |x_1(t)-x_0(t)|^2}\\
\hspace{3.75 cm}
\displaystyle{+(\gamma+1)\sigma_\tau(t)|x_1(t)-x_0(t)|.}
\end{array}
$$
By definition of $\alpha(\cdot)$ and \eqref{below}, we get
$$
\begin{array}{l}
\displaystyle{\frac 1 2 D^+|x_1(t)-x_0(t)|^2\leq \frac \gamma 2 \left| \phi(|x_1(t-\tau(t))-x_0(t)|)-\phi(|x_0(t-\tau(t))-x_1(t)|)\right| |x_1(t)-x_0(t)|^2}\\
\hspace{3.75 cm}
\displaystyle{-\frac \gamma 2 \phi(2R) |x_1(t)-x_0(t)|^2+(\gamma+1)\sigma_\tau(t)|x_1(t)-x_0(t)|.}
\end{array}
$$
Since $\phi(\cdot)$ is Lipschitz continuous, we obtain
$$
\begin{array}{l}
\displaystyle{ \frac 1 2 D^+|x_1(t)-x_0(t)|^2\leq \frac \gamma 2 L |x_1(t)-x_0(t)|^2 \sigma_\tau(t)-\frac \gamma 2 \phi(2R) |x_1(t)-x_0(t)|^2}\\
\hspace{3.75 cm}
\displaystyle{+(\gamma+1)\sigma_\tau(t)|x_1(t)-x_0(t)|,}
\end{array}
$$
and using once again \eqref{tesi_lemma1} yields
$$
\begin{array}{l}
\displaystyle{ \frac 1 2 D^+|x_1(t)-x_0(t)|^2\leq -\frac \gamma 2 \phi(2R)|x_1(t)-x_0(t)|^2+R_\gamma \sigma_\tau(t)|x_1(t)-x_0(t)|,}
\end{array}
$$
from which we obtain inequality \eqref{tesi_lemma2}.
\end{proof}
\begin{Lemma}\label{lemma3}
Let $\{ x_i\}_{i\in\{0,1,...,N\}}$ be the solution to \eqref{main_model}-\eqref{initial_condition}, with control as in \eqref{control}. Then, for any $t\ge\bar\tau,$ we have 
\begin{equation}\label{tesi1_lemma3}
\left|\frac{d{x}_0}{dt}(t)\right|\leq \gamma d_0(t)+\gamma\sigma_\tau(t)
\end{equation}
and
\begin{equation}\label{tesi2_lemma3}
\max_{i\in\{1,...,N\}} \left|\frac{d {x}_i }{dt}(t)\right|\leq (2+\gamma)d_0(t)+(1+\gamma)\sigma_\tau(t).
\end{equation}
\end{Lemma}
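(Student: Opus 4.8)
The plan is to bound each right-hand side directly from the evolution equations, using only the trivial cut-off bounds $\phi\le 1$ and $a_{ij}\le 1$ (the first because $\phi(0)=1$ and $\phi$ is non-increasing, the second by definition of $a$), together with the elementary decomposition of every displacement into a ``delay part'' controlled by $\sigma_\tau(t)$ and a ``configuration part'' controlled by $d_0(t)$. No new idea beyond the estimates already carried out in Lemma~\ref{lemma2} is needed.

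For \eqref{tesi1_lemma3} I would start from the explicit control \eqref{control}, which equals $dx_0/dt$. Taking norms and using the triangle inequality together with $\phi\le 1$ gives
$$
\left|\frac{dx_0}{dt}(t)\right|\le \gamma\,\alpha(t)\sum_{j=1}^N |x_j(t-\tau(t))-x_0(t)|.
$$
For each $j$ I split $|x_j(t-\tau(t))-x_0(t)|\le |x_j(t-\tau(t))-x_j(t)|+|x_j(t)-x_0(t)|$; the first summand is at most $\int_{t-\bar\tau}^t \max_k |\frac{dx_k}{ds}(s)|\,ds\le\sigma_\tau(t)$, exactly as estimated in the proof of Lemma~\ref{lemma2} (using $\tau(t)\le\bar\tau$ and positivity of the integrand), while the second is at most $d_0(t)$ by definition \eqref{dzero} of $d_0$. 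Summing over the $N$ indices and invoking $N\alpha(t)\le 1$ (which holds because $\alpha(t)\le\frac{1}{2N}\phi(\cdot)\le\frac{1}{2N}$) yields \eqref{tesi1_lemma3}.

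For \eqref{tesi2_lemma3} I would argue in the same way on \eqref{main_model}. Passing to norms and using $a_{ij}\le 1$, $\phi\le 1$,
$$
\left|\frac{dx_i}{dt}(t)\right|\le \frac1N\sum_{j\ne i}|x_j(t-\tau(t))-x_i(t)|+\gamma\,|x_0(t-\tau(t))-x_i(t)|.
$$
The leader term is handled exactly as above, giving $\gamma\big(d_0(t)+\sigma_\tau(t)\big)$. For the interaction sum the only new point is that the configuration part now measures an agent--agent distance: inserting $x_0(t)$ gives $|x_j(t)-x_i(t)|\le |x_j(t)-x_0(t)|+|x_0(t)-x_i(t)|\le 2d_0(t)$, so each term is bounded by $2d_0(t)+\sigma_\tau(t)$, and averaging over the $N-1$ indices (hence a factor $\frac{N-1}{N}\le 1$) gives $2d_0(t)+\sigma_\tau(t)$. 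Adding the two contributions produces $(2+\gamma)d_0(t)+(1+\gamma)\sigma_\tau(t)$, which is \eqref{tesi2_lemma3}.

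There is no genuine obstacle here; the estimate is a matter of careful bookkeeping. The only points requiring attention are the correct accounting of the combinatorial factors --- the cancellation of the prefactor $N$ against $N\alpha(t)\le 1$ for the leader, and the appearance of the factor $2$ in the agent--agent distance, which is precisely why $d_0$ carries the coefficient $2+\gamma$ rather than $1+\gamma$ --- and the observation that each frozen-delay displacement is dominated by the single quantity $\sigma_\tau(t)$, valid for $t\ge\bar\tau$ so that the integral $\int_{t-\bar\tau}^t$ is well defined.
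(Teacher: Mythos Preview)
Your proposal is correct and matches the paper's own proof almost line for line: the paper uses the same triangle-inequality splittings $x_j(t-\tau)-x_i(t)=(x_j(t-\tau)-x_j(t))+(x_j(t)-x_0(t))+(x_0(t)-x_i(t))$ and $x_0(t-\tau)-x_i(t)=(x_0(t-\tau)-x_0(t))+(x_0(t)-x_i(t))$, together with $\phi\le1$, $a_{ij}\le1$, $N\alpha(t)\le1$, and the bound of each delay increment by $\sigma_\tau(t)$. The only cosmetic difference is that the paper writes out the five resulting terms separately before summing, whereas you group them into ``delay'' and ``configuration'' parts from the start.
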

\begin{proof}
By direct calculation, we have that, for any $t\geq \bar\tau$,
$$
\begin{array}{l}
\displaystyle{\left|\frac{d {x}_0}{dt}(t)\right|=|u(t)|\leq \gamma \alpha(t)\sum_{j=1}^N \phi(|x_j(t-\tau(t))-x_0(t)|)|x_j(t-\tau(t))-x_j(t)|}\\
\hspace{4 cm}
\displaystyle{ +\gamma\alpha(t)\sum_{j=1}^N \phi(|x_j(t-\tau(t))-x_0(t)|)|x_j(t)-x_0(t)|}\\
\hspace{2.85 cm}
\displaystyle{\leq \gamma N \alpha(t)\sigma_\tau(t)+\gamma N \alpha (t)|x_1(t)-x_0(t)|.}
\end{array}
$$
Noticing once again that $N\alpha(t)\leq 1$ for any $t\geq 0$, we obtain \eqref{tesi1_lemma3}. In order to prove inequality \eqref{tesi2_lemma3}, for any $i\in\{1,...,N\}$ we have that
$$
\begin{array}{l}
\displaystyle{ \left|\frac{d{x}_i}{dt}(t)\right|\leq \frac 1 N \sum_{j\neq i} a_{ij}(t)|x_j(t-\tau(t))-x_j(t)|+\frac 1 N \sum_{j\neq i} a_{ij}(t)|x_j(t)-x_0(t)|}\\
\hspace{1.25 cm}
\displaystyle{+\frac 1 N \sum_{j\neq i} a_{ij}(t)|x_0(t)-x_i(t)|+\gamma\phi(|x_0(t-\tau(t))-x_i(t)|)|x_0(t-\tau(t))-x_0(t)|}\\
\hspace{1.25 cm}
\displaystyle{+\gamma\phi(|x_0(t-\tau(t))-x_i(t)|)|x_0(t)-x_i(t)|}\\
\hspace{1.25 cm}
\displaystyle{\leq (2+\gamma)|x_1(t)-x_0(t)|+(1+\gamma)\sigma_\tau(t), \quad \forall \ t\ge\bar{\tau}.}
\end{array}
$$
Hence, by taking the maximum for $i\in\{1,...,N\}$ we obtain \eqref{tesi2_lemma3}.
\end{proof}
We are now ready to prove the main theorem of this section, namely the consensus behavior of the solution to \eqref{main_model}-\eqref{initial_condition}.
\begin{Theorem}
Let $\{x_i\}_{i\in\{0,1,...,N\}}$ be the solution to \eqref{main_model}-\eqref{initial_condition}, with control as in \eqref{control}. If
\begin{equation}\label{cond_tau}
\bar\tau< \ln\left( 1+\frac{\gamma\phi(2R)}{\gamma(1+2\gamma)\phi(2R)+4(1+\gamma)R_\gamma}\right),
\end{equation}
then there exist $\beta^*,C>0$ such that, for any $t\geq 0,$
\begin{equation}\label{tesi_teo1}
d_0(t)\leq Ce^{-\beta^*t}.
\end{equation}
\end{Theorem}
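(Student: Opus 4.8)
The plan is to close a delayed Gr\"onwall (Halanay-type) inequality for $d_0$ by a continuity argument, combining Lemmas \ref{lemma2} and \ref{lemma3}. Set $a:=\frac{\gamma}{2}\phi(2R)$, so that Lemma \ref{lemma2} reads $D^+d_0(t)\le -a\,d_0(t)+R_\gamma\sigma_\tau(t)$ for $t\ge\bar\tau$. Since $|u(s)|=\big|\frac{dx_0}{ds}(s)\big|$, the integrand defining $\sigma_\tau$ is $g(s):=\max_j\big|\frac{dx_j}{ds}(s)\big|+|u(s)|$, and summing the two bounds of Lemma \ref{lemma3} gives $g(s)\le A\,d_0(s)+B\,\sigma_\tau(s)$ with $A:=2+2\gamma$ and $B:=1+2\gamma$. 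The essential difficulty, and the reason a smallness condition on $\bar\tau$ is unavoidable, is that $\sigma_\tau$ is \emph{self-referential}: it is an integral of the velocities, which in turn are controlled by $d_0$ \emph{and} by $\sigma_\tau$ itself. The whole argument consists in absorbing this self-dependence when $\bar\tau$ is small and reducing everything to a genuine delay differential inequality in $d_0$ alone.

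Concretely, I would fix a rate $\beta^*\in(0,\min\{a,1\})$ and a constant $C>0$, and prove $d_0(t)\le Ce^{-\beta^* t}$ for all $t\ge0$ by contradiction. First I would choose $C$ large enough that the bound holds on the compact starting interval $[0,\bar\tau]$; this is possible because Lemma \ref{lemma1} bounds all trajectories (so $d_0\le 2R$ and, through Lemma \ref{lemma3}, the velocities) there. Suppose the estimate fails and let $t^*>\bar\tau$ be the first crossing time, so that $d_0(s)<Ce^{-\beta^* s}$ for $s<t^*$ and $d_0(t^*)=Ce^{-\beta^* t^*}$. Propagating $d_0(s)\le Ce^{-\beta^* s}$ into $g(s)\le A\,d_0(s)+B\,\sigma_\tau(s)$ and into $\sigma_\tau(s)=\int_{s-\bar\tau}^{s}g$, a short self-consistent (fixed-point) computation yields $g(s)\le\tilde C e^{-\beta^* s}$ and hence
\[
\sigma_\tau(s)\le \tilde C\,\frac{e^{\beta^*\bar\tau}-1}{\beta^*}\,e^{-\beta^* s},\qquad
\tilde C:=\frac{A\,C}{\,1-B\,(e^{\beta^*\bar\tau}-1)/\beta^*\,},
\]
provided $B\,(e^{\beta^*\bar\tau}-1)/\beta^*<1$. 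Since $\beta^*\le1$, the factor $(e^{\beta^*\bar\tau}-1)/\beta^*$ is bounded by $e^{\bar\tau}-1$, which is exactly the quantity governing \eqref{cond_tau}.

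Plugging this bound on $\sigma_\tau(t^*)$ into Lemma \ref{lemma2} gives $D^+d_0(t^*)\le\big(-a+R_\gamma A\,(e^{\bar\tau}-1)/(1-B(e^{\bar\tau}-1))\big)\,Ce^{-\beta^* t^*}$. A first upward crossing of the curve $Ce^{-\beta^* t}$ forces $D^+d_0(t^*)\ge-\beta^* Ce^{-\beta^* t^*}$, so a contradiction is reached as soon as the coefficient in brackets is $<-\beta^*$, which holds for $\beta^*$ small once $-a+R_\gamma A\,(e^{\bar\tau}-1)/(1-B(e^{\bar\tau}-1))<0$. Rearranging, this last inequality is precisely $(e^{\bar\tau}-1)\big(aB+AR_\gamma\big)<a$, and substituting $a=\frac{\gamma}{2}\phi(2R)$, $A=2+2\gamma$, $B=1+2\gamma$ recovers exactly \eqref{cond_tau}. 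Hence no crossing can occur, $d_0(t)\le Ce^{-\beta^* t}$ for every $t\ge0$, which is \eqref{tesi_teo1}.

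The hard part is the estimate on $\sigma_\tau$: because $\sigma_\tau$ controls the velocities while the velocities control $\sigma_\tau$, one must run the internal fixed-point step, and it only closes when $B(e^{\bar\tau}-1)<1$ — a consequence of \eqref{cond_tau} — with the replacement of $(e^{\beta^*\bar\tau}-1)/\beta^*$ by $e^{\bar\tau}-1$ (valid since $\beta^*\le1$) being what produces the clean logarithmic threshold. A secondary technical point is that $d_0$ is only Lipschitz, so every comparison and crossing argument must be carried out with the upper Dini derivative $D^+$ rather than an ordinary derivative; I would handle the first-crossing step by the standard remark that at $t^*$ the continuous function $d_0(t)-Ce^{-\beta^* t}$ attains a value from below, forcing the Dini-derivative inequality above.
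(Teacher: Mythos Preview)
Your argument is correct and recovers exactly condition \eqref{cond_tau}, but it proceeds along a genuinely different route from the paper's. The paper introduces the Lyapunov functional
\[
L(t)=d_0(t)+\beta\int_{t-\bar\tau}^t e^{-(t-s)}\int_s^t g(\sigma)\,d\sigma\,ds,
\]
differentiates it, and uses Lemmas~\ref{lemma2}--\ref{lemma3} to show $D^+L(t)\le -\beta^*L(t)$ for a suitable choice of the weight $\beta$; the two algebraic constraints on $\beta$ produce precisely \eqref{cond_tau}. You instead run a direct first-crossing/Halanay-type argument on $d_0$, closing the loop on $\sigma_\tau$ by an internal fixed-point estimate. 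The Lyapunov route is cleaner in that the extra integral term automatically stores the past velocity information, so no auxiliary continuity argument for $\sigma_\tau$ is needed and the Dini-derivative bookkeeping reduces to a single functional. Your approach, on the other hand, is more elementary (no functional to guess) and in fact slightly sharper in principle: had you let $\beta^*\to 0$ and kept $(e^{\beta^*\bar\tau}-1)/\beta^*\to\bar\tau$ rather than bounding it by $e^{\bar\tau}-1$, the resulting threshold would be $\bar\tau<\gamma\phi(2R)/\big(\gamma(1+2\gamma)\phi(2R)+4(1+\gamma)R_\gamma\big)$, which is weaker than \eqref{cond_tau}. Two points deserve a line of extra care when you write it out: (i) the ``self-consistent'' bound on $\sigma_\tau$ is itself a continuity argument (choose $C,D$ with the same ratio, large enough on an initial layer $[\bar\tau,2\bar\tau]$, then propagate); and (ii) the first-crossing contradiction with Dini derivatives is cleanest via the standard $\epsilon$-perturbation of $C$ rather than a bare ``value from below'' remark.
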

\begin{proof}
Let us define the following Lyapunov functional
$$
L(t):= d_0(t)+\beta\int_{t-\bar\tau}^t e^{-(t-s)} \int_s^t \left( \max_{j\in\{1,...,N\}} \left|\frac{d{x}_j}{d\sigma}(\sigma)\right|+\left|\frac{d{x}_0}{d\sigma}(\sigma)\right|\right) d\sigma ds,\quad t\ge\bar\tau,
$$
where $\beta>0$ is a suitable parameter which will be determined later. Then, differentiating we obtain
$$
\begin{array}{l}
\displaystyle{D^+L(t)= D^+d_0(t)-\beta e^{-\bar\tau}\sigma_\tau(t)}\\
\hspace{2 cm}
\displaystyle{-\beta\int_{t-\bar\tau}^t e^{-(t-s)}\int_s^t \left( \max_{j\in\{1,...,N\}} \left|\frac{d{x}_j}{d\sigma}(\sigma)\right|+\left|\frac{d{x}_0}{d\sigma}(\sigma)\right|\right) d\sigma ds}\\
\hspace{2 cm}
\displaystyle{+\beta(1-e^{-\bar\tau})\left\{ \max_{j\in\{1,...,N\}} \left|\frac{d{x}_j}{dt}(t)\right|+\left|\frac{d{x}_0}{dt}(t)\right|\right\} .}
\end{array}
$$
From Lemma \ref{lemma2} and Lemma \ref{lemma3} we have that
$$
\begin{array}{l}
\displaystyle{ D^+L(t)\leq \left[ -\frac \gamma 2 \phi(2R)+2\beta(1-e^{-\bar\tau})(1+\gamma)\right] d_0(t)}\\
\hspace{1.8 cm}
\displaystyle{+\left[R_\gamma-\beta e^{-\bar\tau}+\beta(1-e^{-\bar\tau})(1+2\gamma)\right]\sigma_\tau(t)}\\
\hspace{1.8 cm}
\displaystyle{ -\beta \int_{t-\bar\tau}^t e^{-(t-s)} \int_s^t \left( \max_{j\in\{1,...,N\}} \left|\frac{d{x}_j}{d\sigma}(\sigma)\right|+\left|\frac{d{x}_0}{d\sigma}(\sigma)\right|\right) d\sigma ds.}
\end{array}
$$
We want to show that there exists $\beta^*>0$ such that
\begin{equation}\label{scopo}
D^+L(t)\leq -\beta^* L(t)
\end{equation}
for any $t\geq \bar\tau$. To this aim, we need that
\begin{equation}\label{eq1}
-\frac \gamma 2 \phi(2R)+2\beta (1-e^{-\bar\tau}) (1+\gamma)<0,
\end{equation}
and
\begin{equation}\label{eq2}
R_\gamma-\beta e^{-\bar\tau}+\beta(1-e^{-\bar\tau})(1+2\gamma)\leq 0.
\end{equation}
From \eqref{eq1} we immediately have that $\beta$ must satisfy
$$
\beta < \frac{\gamma \phi(2R)}{4(1+\gamma)(1-e^{-\bar\tau})}.
$$
Moreover, \eqref{eq2} is satisfied for
$$
\beta \geq \frac{R_\gamma}{e^{-\bar\tau}-(1+2\gamma)(1-e^{-\bar\tau})},
$$
with the restriction on the time delay
$$
\bar\tau < \ln \left(1+\frac{1}{1+2\gamma}\right).
$$
For the existence of a constant $\beta$ satisfying both inequalities, we need
$$
\frac{R_\gamma}{e^{-\bar\tau}-(1+2\gamma)(1-e^{-\bar\tau})}<  \frac{\gamma \phi(2R)}{4(1+\gamma)(1-e^{-\bar\tau})}.
$$
This gives us a  further condition on the delay, namely
$$
\bar\tau< \ln\left( 1+\frac{\gamma\phi(2R)}{\gamma(1+2\gamma)\phi(2R)+4(1+\gamma)R_\gamma}\right),
$$
which clearly implies the previous restriction. Therefore, if the bound on the time delay $\bar\tau$ satisfies \eqref{cond_tau}, then inequality \eqref{scopo} is satisfied for a suitable positive constant $\beta^*.$ Hence, the theorem is proved.
\end{proof}

\section{Local controllability}\label{section_2}
In this section we will prove the existence of an admissible control which steers each agent to a given state $\overline{x}\in\RR^d$, which is the leader's aim. This shows the local controllability of system \eqref{main_model}, extending the local controllability result proved in \cite{WCB} for the undelayed model. First of all, we have the following lemma.
\begin{Lemma}\label{lemma4}
Let $\{x_i\}_{i\in\{0,1,...,N\}}$ be the solution to \eqref{main_model}-\eqref{initial_condition}, with $u(t)=0$ for any $t\geq 0$ and $x_{0,0}(s)=x_{0}$, for any $s\in[-\bar\tau,0]$. If the initial conditions satisfy for any $i\in\{1,...,N\}$
$$
\max_{s\in[-\bar\tau,0]} |x_{i}(s)-x_0|\leq \frac \delta 2,
$$
and the following assumption on the parameters
\begin{equation}\label{delta_ip}
\phi\left(\frac \delta 2\right)>\frac{1}{2\gamma},
\end{equation}
is satisfied, then there exist two constants $C,K>0$ such that
\begin{equation}\label{tesi_lemma4}
 |x_i(t)-x_0|\leq Ce^{-Kt},
\end{equation}
for any $i\in\{1,...,N\}$ and for any $t\geq 0$.
\end{Lemma}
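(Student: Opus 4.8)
The plan is to exploit that with $u\equiv 0$ the leader does not move, so $x_0(t)=x_0$ for all $t\ge 0$; setting $y_i(t):=x_i(t)-x_0$, the system reduces to a pure Hegselmann--Krause relaxation of the agents toward the fixed point $x_0$, and the assertion \eqref{tesi_lemma4} is precisely the exponential decay of $D(t):=\max_{i\in\{1,\dots,N\}}|y_i(t)|$, which here coincides with $d_0(t)$. Since \emph{no} smallness condition on $\bar\tau$ is assumed, I would prove the decay by a Halanay-type comparison rather than by the Lyapunov functional of the previous section: that functional couples $d_0$ with $\sigma_\tau$ and forces a restriction of the form \eqref{cond_tau} on the delay, which we want to avoid entirely.

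First I would establish the invariant region $D(t)\le\delta/2$ for all $t\ge 0$, by the continuity/contradiction scheme already used in Lemma \ref{lemma1}. The key point is that as long as every agent stays within $\delta/2$ of $x_0$, each delayed pairwise distance satisfies $|x_j(t-\tau(t))-x_i(t)|\le|y_j(t-\tau(t))|+|y_i(t)|\le\delta$, so the cut-off is saturated and $a_{ij}(t)\equiv 1$. Evaluating $\tfrac12 D^+|y_k(t)|^2$ at an index $k$ realizing the maximum and using, on the boundary, $\langle y_j(t-\tau(t)),y_k\rangle\le|y_j(t-\tau(t))||y_k|\le|y_k|^2$ together with the attractive term $-\gamma\phi(|y_k|)|y_k|^2$, one finds $D^+D(t)<0$ whenever $D(t)=\delta/2$; hence the region cannot be exited.

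With $a_{ij}\equiv 1$ secured, I would differentiate $D$ in the Dini sense: picking $k$ with $|y_k(t)|=D(t)$,
\[
D(t)\,D^+D(t)\le \frac1N\sum_{j\ne k}\big(|y_j(t-\tau(t))|-|y_k(t)|\big)|y_k(t)|-\gamma\phi(|y_k(t)|)|y_k(t)|^2 .
\]
Bounding $|y_j(t-\tau(t))|\le\sup_{s\in[t-\bar\tau,t]}D(s)$ and $\phi(|y_k(t)|)\ge\phi(\delta/2)$ (monotonicity of $\phi$ and the invariant bound), and dividing by $D(t)$, I obtain a Halanay inequality
\[
D^+D(t)\le -\alpha\,D(t)+\beta\sup_{s\in[t-\bar\tau,t]}D(s),
\]
where the damping rate $\alpha$ contains $\gamma\phi(\delta/2)$ and $\beta$ is the delay coefficient produced by the averaging term. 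The classical Halanay inequality (see \cite{Halanay}) then yields $D(t)\le Ce^{-Kt}$ with $K>0$ solving $K=\alpha-\beta e^{K\bar\tau}$, provided the strict gap $\alpha>\beta>0$ holds, which is exactly where the hypothesis $\phi(\delta/2)>\frac{1}{2\gamma}$ enters.

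The main obstacle is the delay: unlike in the undelayed model, $\sup_{[t-\bar\tau,t]}D$ need not equal $D(t)$, so a naive Gronwall estimate fails and one cannot simply absorb the interaction into the damping. Reducing matters to the condition $\alpha>\beta$ is what lets Halanay's inequality deliver exponential decay for \emph{arbitrarily large} (bounded) $\bar\tau$, and the assumption $\phi(\delta/2)>\frac{1}{2\gamma}$ guarantees that the leader's attraction strictly dominates the delayed feedback (a sharp pairing of the interaction and damping terms in fact only requires $\gamma\phi(\delta/2)>0$, so the stated threshold comfortably provides the needed positivity). A secondary technical point, treated as in the earlier proofs, is the non-differentiability of the maximum: one works throughout with $D^+$ and evaluates along an index attaining the maximum at each time.
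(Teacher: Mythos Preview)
Your proof is correct and follows the same two-step strategy as the paper: first establish the invariance $|x_i(t)-x_0|\le\delta/2$ by a continuity/contradiction argument, then derive a Halanay-type differential inequality for the maximal distance and conclude exponential decay. The difference is in the second step. The paper works with the \emph{squared} quantity $F(t)=\max_i|x_i(t)-x_0|^2$ and applies Young's inequality to the cross term $\langle x_j(t-\tau(t))-x_0,\,x_1(t)-x_0\rangle$, arriving at
\[
D^+F(t)\le F(t-\tau(t))-2\gamma\phi\!\left(\tfrac{\delta}{2}\right)F(t),
\]
so that Halanay requires $2\gamma\phi(\delta/2)>1$, which is exactly hypothesis \eqref{delta_ip}. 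You instead work with $D(t)$ itself and use only Cauchy--Schwarz, which yields $\alpha=\tfrac{N-1}{N}+\gamma\phi(\delta/2)$ and $\beta=\tfrac{N-1}{N}$; here $\alpha>\beta$ holds automatically. Your remark that ``a sharp pairing \dots\ in fact only requires $\gamma\phi(\delta/2)>0$'' is therefore accurate: your route actually proves a stronger statement than the lemma as written, and reveals that the threshold \eqref{delta_ip} is an artifact of squaring before invoking Halanay rather than an intrinsic feature of the dynamics.
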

\begin{proof}
First of all, we claim that if $\max_{s\in[-\bar\tau,0]} |x_{i}(s)-x_0|\leq \frac \delta 2$ for any $i\in\{1,...,N\}$, then
\begin{equation}\label{claim}
|x_{i}(t)-x_0|\leq \frac \delta 2,
\end{equation}
for any $t>0$. In order to prove this claim, suppose by contradiction that this is not true, namely there exists $\bar{t}>0$ such that for some $i\in\{1,...,N\}$,
$$
|x_i(\bar{t})-x_0|>\frac \delta 2.
$$
Then, by continuity there exists a time $t^*\in(0,\bar{t})$ such that $|x_i(t^*)-x_0|=\frac \delta 2$, $|x_j(t)-x_0|<\frac \delta 2$, for any $t<t^*$ and for any $j\in\{1,...,N\}$, and
$$
D^+|x_i(t^*)-x_0|\geq 0.
$$
However, we have that
$$
\begin{array}{l}
\displaystyle{\frac 1 2 D^+|x_i(t^*)-x_0|^2 \leq \frac 1 N \sum_{j\neq i} a_{ij}(t^*)\langle x_j(t^*-\tau(t^*))-x_i(t^*),x_i(t^*)-x_0\rangle }\\
\hspace{4 cm}
\displaystyle{-\gamma\phi(|x_0-x_i(t^*)|)|x_0-x_i(t^*)|^2}\\
\hspace{3.1 cm}
\displaystyle{\leq \frac 1 N \sum_{j\neq i}a_{ij}(t^*)|x_j(t^*-\tau(t^*))-x_0|\frac \delta 2 -\frac 1 N \sum_{j\neq i} a_{ij}(t^*) \left( \frac \delta 2 \right) ^2}\\
\hspace{4 cm}
\displaystyle{-\gamma \phi\left( \frac \delta 2\right) \left(\frac \delta 2 \right) ^2<0,}
\end{array}
$$
which is in contradiction with the previous assumption. Hence, the claim is true. In order to prove \eqref{tesi_lemma4}, we consider for any $t\geq 0$ the function
$$
F(t):=\max_{i\in\{1,...,N\}} |x_i(t)-x_0|^2.
$$
As in section \ref{section_consensus}, without loss of generality, we assume $F(t)=|x_1(t)-x_0|^2$. Then, for any $t\geq 0$ we have that
$$
\begin{array}{l}
\displaystyle{D^+F(t)=2\left\langle x_1(t)-x_0,\frac{d{x}_1}{dt}(t)\right\rangle }\\
\hspace{1.425 cm}
\displaystyle{ \leq\frac 2 N \sum_{j\neq 1} \langle x_j(t-\tau(t))-x_1(t),x_1(t)-x_0\rangle-2\gamma\phi(|x_0-x_1(t)|)F(t)}\\
\hspace{1.425 cm}
\displaystyle{ \leq \frac 2 N \sum_{j\neq 1} \langle x_j(t-\tau(t))-x_0,x_1(t)-x_0\rangle -\frac{2(N-1)}{N}F(t)-2\gamma\phi\left( \frac \delta 2 \right) F(t).}
\end{array}
$$
Using Cauchy-Schwarz and Young's inequalities yields
$$
\begin{array}{l}
\displaystyle{ D^+F(t)\leq \frac{N-1}{N} F(t-\tau(t))-2\gamma\phi\left( \frac \delta 2 \right) F(t)}\\
\hspace{1.425 cm}
\displaystyle{ \leq F(t-\tau(t))-2\gamma\phi\left( \frac \delta 2 \right) F(t).}
\end{array}
$$
Since hypothesis \eqref{delta_ip} holds, then by Halanay inequality (see e.g. \cite{Halanay, MG}) we get the existence of two positive constants $\tilde C, \tilde K>0$ such that for any $t\geq 0$,
$$
F(t)\leq \tilde C e^{-\tilde {K}t},
$$
which clearly implies \eqref{tesi_lemma4}.
\end{proof}
Before proving the main result, we need the following additional lemma.
\begin{Lemma}\label{lemma5}
Let $\{x_i\}_{i\in\{0,1,...,N\}}$ be the solution to \eqref{main_model}-\eqref{initial_condition}. Let $\xi\in\RR^d$. Then, there exists a control $u(\cdot):[0,+\infty)\to\RR^d$ such that $x_0$ reaches $\xi$ in finite time, namely there exists $t_0>0$ such that $x_0(t_0)=\xi$. Furthermore, if
$$
\tr:=\max_{i\in\{0,1,...,N\}}\max_{s\in[-\bar\tau,0]} |x_i(s)-\xi|,
$$
then,
\begin{equation}\label{passaggio}
\max_{i\in\{0,1,...,N\}} |x_i(t)-\xi|\leq \tr,
\end{equation}
for any $t\geq 0$.
\end{Lemma}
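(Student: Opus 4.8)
The plan is to exploit the fact that the leader's equation $\frac{dx_0}{dt}(t)=u(t)$ is fully actuated, so that $x_0$ can be driven along any prescribed curve, and then to run an invariance (barrier) argument, in the spirit of Lemma \ref{lemma1}, to show that the remaining agents cannot leave the ball of radius $\tr$ centered at $\xi$.

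First I would construct the control explicitly. Writing $x_0(0)=x_{0,0}(0)$ for the leader's position at time $0$, I set, for a horizon $t_0>0$ to be fixed,
\[
u(t)=\frac{\xi-x_0(0)}{t_0}\quad\text{for }t\in[0,t_0],\qquad u(t)=0\quad\text{for }t>t_0.
\]
Integrating, $x_0(t)=(1-t/t_0)\,x_0(0)+(t/t_0)\,\xi$ on $[0,t_0]$, so that $x_0(t_0)=\xi$ (finite-time reachability) and $x_0(t)=\xi$ for $t>t_0$. Since $|u|=|\xi-x_0(0)|/t_0$ on $[0,t_0]$ and $0$ afterwards, choosing $t_0\geq |\xi-x_0(0)|/M$ makes $u$ admissible. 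Moreover, along this trajectory $x_0(t)$ is a convex combination of $x_0(0)$ and $\xi$ (or equals $\xi$), hence $|x_0(t)-\xi|\leq|x_0(0)-\xi|\leq\tr$ for all $t\geq0$, which already yields \eqref{passaggio} for the index $i=0$.

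It remains to control the agents $x_i$, $i\in\{1,\dots,N\}$, which I would do by mimicking the proof of Lemma \ref{lemma1} with $\xi$ in place of the origin. Fix $\ep>0$, set $\tr_\ep:=\tr+\ep$ and
\[
S_\ep:=\{\,t>0:\ \max_{i\in\{0,\dots,N\}}|x_i(s)-\xi|<\tr_\ep\ \ \forall\,s\in[0,t)\,\},
\]
which is nonempty by continuity; put $T_\ep:=\sup S_\ep$ and argue by contradiction assuming $T_\ep<+\infty$, so that $\max_i|x_i(t)-\xi|\to\tr_\ep$ as $t\to T_\ep^-$. For $t<T_\ep$ and any $i$, I expand $\tfrac12\tfrac{d}{dt}|x_i(t)-\xi|^2=\langle x_i(t)-\xi,\tfrac{dx_i}{dt}(t)\rangle$, rewriting each difference as $x_j(t-\tau(t))-x_i(t)=(x_j(t-\tau(t))-\xi)-(x_i(t)-\xi)$ and similarly for the leader term. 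The delayed states $x_j(t-\tau(t))$ and $x_0(t-\tau(t))$ are bounded by $\tr_\ep$: if $t-\tau(t)<0$ this follows from the initial data through the definition of $\tr$, while if $0\leq t-\tau(t)<t\leq T_\ep$ it follows from membership in $S_\ep$. Using Cauchy--Schwarz together with $a_{ij}\leq1$ and $\phi\leq1$ then gives
\[
\frac{d}{dt}|x_i(t)-\xi|\leq(1+\gamma)\bigl(\tr_\ep-|x_i(t)-\xi|\bigr),
\]
and Gronwall's inequality yields $|x_i(t)-\xi|\leq e^{-(1+\gamma)t}\bigl(|x_i(0)-\xi|-\tr_\ep\bigr)+\tr_\ep<\tr_\ep$ for all $t<T_\ep$, since $|x_i(0)-\xi|\leq\tr<\tr_\ep$. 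Combined with the leader bound from the first step, this forces $\lim_{t\to T_\ep^-}\max_i|x_i(t)-\xi|<\tr_\ep$, contradicting the definition of $T_\ep$; hence $T_\ep=+\infty$, and letting $\ep\to0$ gives \eqref{passaggio}.

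The main obstacle is not the differential estimate, which reproduces the barrier computation of Lemma \ref{lemma1} almost verbatim, but rather the bookkeeping of the delayed arguments $x_i(t-\tau(t))$ inside the invariance set, and the need to build a \emph{single} control that simultaneously (i) reaches $\xi$ in finite time, (ii) respects the admissibility bound $M$, and (iii) keeps the leader within distance $\tr$ of $\xi$ for all $t$, so that the ball is genuinely invariant for the whole coupled system. The straight-line-then-freeze choice above meets all three requirements at once, which is why I would adopt it.
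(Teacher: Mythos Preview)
Your proposal is correct and follows essentially the same route as the paper: an explicit control driving $x_0$ to $\xi$ along a straight segment (so that $|x_0(t)-\xi|\leq\tr$ automatically), followed by the $\ep$-barrier argument of Lemma~\ref{lemma1} centered at $\xi$ for the remaining agents. The only cosmetic difference is the control itself: the paper takes the bang-bang choice $u(t)=M\,(\xi-x_0(t))/|\xi-x_0(t)|$ until $x_0$ hits $\xi$, whereas you use the constant-velocity control $u(t)=(\xi-x_0(0))/t_0$; both keep the leader on the segment $[x_0(0),\xi]$ and are admissible for $t_0\geq|\xi-x_0(0)|/M$, so the subsequent invariance computation is identical.
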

\begin{proof}
Let us take for any $t\geq 0$
\begin{equation}\label{control2}
u(t):= \begin{cases} M \frac{\xi-x_0(t)}{|\xi-x_0(t)|}, \ \text{if} \ x_0(t)\neq \xi,\\
0, \ \text{if} \ x_0(t)=\xi.
\end{cases}
\end{equation}
Then, $u$ is an admissible control. Moreover, for any $t\geq 0$, if $x_0(t)\neq \xi$,
$$
\begin{array}{l}
\displaystyle{ \frac 1 2 D^+ |x_0(t)-\xi|^2=\frac{M}{|x_0(t)-\xi|}\langle x_0(t)-\xi,\xi-x_0(t)\rangle=-M|x_0(t)-\xi|.}
\end{array}
$$
Therefore,
$$
|x_0(t)-\xi|\leq |x_0(0)-\xi|-Mt, \quad t\in[0,t_0],
$$
where $t_0=\frac{|x_0(0)-\xi|}{M}$. We have then proved the first part of the statement.

In order to prove \eqref{passaggio}, first observe that, by construction, if $|x_0(0)-\xi|\leq\tilde{R}$, then
\begin{equation}\label{dis_contr}
|x_0(t)-\xi|\leq\tr,
\end{equation}
for any $t\geq0$. For the other agents, we use a similar argument as in the proof of Lemma \ref{lemma1}. Indeed, let $\tr_\epsilon:=\tr+\epsilon$. Set
$$
\mathcal{T}_\epsilon:=\left\{ t\in[0,+\infty) \ : \  \max_{i\in\{1,...,N\}} |x_i(s)-\xi|<\tr_\epsilon, \ \forall s\in [0,t)\right\}.
$$
By continuity, $\mathcal{T}_\epsilon\neq \emptyset$. Let us set $T_\epsilon:=\sup \mathcal{T}_\epsilon$. We suppose by contradiction that $T_\epsilon<+\infty$. Hence,
\begin{equation}\label{contr4}
\lim_{t\to T_\epsilon^ -} \max_{i\in\{1,...,N\}} |x_i(t)-\xi|=\tr_\epsilon.
\end{equation}
Moreover, from \eqref{main_model}, for any $t< T_\epsilon,$
$$
\begin{array}{l}
\displaystyle{\frac 1 2 D^+ |x_i(t)-\xi|^2= \frac 1 N \sum_{j\neq i} a_{ij}(t)\langle x_j(t-\tau(t))-x_i(t),x_i(t)-\xi\rangle}\\
\hspace{3 cm}
\displaystyle{+\gamma \phi(|x_0(t-\tau(t))-x_i(t)|)\langle x_0(t-\tau(t))-x_i(t),x_i(t)-\xi\rangle}\\
\hspace{2.8 cm}
\displaystyle{= \frac 1 N \sum_{j\neq i} a_{ij}(t)\left( \langle x_j(t-\tau(t))-\xi,x_i(t)-\xi\rangle-|x_i(t)-\xi|^2\right) }\\
\hspace{3 cm}
\displaystyle{+\gamma \phi(|x_0(t-\tau(t))-x_i(t)|)\left( \langle x_0(t-\tau(t))-\xi,x_i(t)-\xi\rangle -|x_i(t)-\xi|^2\right).}
\end{array}
$$
Using Cauchy-Schwarz inequality and the fact that
$$
|x_j(t-\tau(t))-\xi|\leq \tr_\epsilon,
$$
for any $j\in\{1,...,N\}$ and for any $t<T_\epsilon$, we get
$$
\begin{array}{l}
\displaystyle{ \frac 1 2 D^+ |x_i(t)-\xi|^2 \leq \frac 1 N \sum_{j\neq i} a_{ij}(t) |x_i(t)-\xi| \left( \tr_\epsilon-|x_i(t)-\xi|\right)}\\
\hspace{3 cm}
\displaystyle{+\gamma \phi(|x_0(t-\tau(t))-x_i(t)|)|x_i(t)-\xi|\left( \tr_\epsilon-|x_i(t)-\xi|\right).}
\end{array}
$$
Since $\tr_\epsilon-|x_i(t)-\xi|>0$, for $t\in\mathcal{T}_\epsilon$, we deduce
$$
\frac 1 2 D^+|x_i(t)-\xi|^2\leq (1+\gamma)|x_i(t)-\xi|(\tr_\epsilon -|x_i(t)-\xi|),
$$
which gives us
$$
D^+|x_i(t)-\xi|\leq (1+\gamma)\tr_\epsilon-(1+\gamma)|x_i(t)-\xi|.
$$
Then, using Gronwall estimate yields
\begin{equation}\label{stella2}
|x_i(t)-\xi|\leq e^{-(1+\gamma)t} (|x_i(0)-\xi|-\tr_\epsilon)+\tr_\epsilon <\tr_\epsilon, \quad \forall \ t<T_\epsilon.
\end{equation}
From \eqref{dis_contr} and \eqref{stella2}, we have
$$
\lim_{t\to T_\epsilon^-} \max_{i\in\{0,1,...,N\}} |x_i(t)-\xi| <\tr_\epsilon,
$$
which is in contradiction with \eqref{contr4}. By sending $\epsilon\to 0$, we obtain \eqref{passaggio}.
\end{proof}
We are now ready to prove the following theorem.
\begin{Theorem}\label{teorema2}
For any $\bar{x}\in\RR^d$, if the maximal time delay $\bar\tau$ satisfies \eqref{cond_tau} and \eqref{delta_ip} holds, then there exists an admissible control $u$ which steers all agents to $\bar{x}$, namely
\begin{equation}\label{tesi_teorema2}
\lim_{t\to +\infty} x_i(t)=\bar{x},
\end{equation}
for any $i\in\{0,1,...,N\}$.
\end{Theorem}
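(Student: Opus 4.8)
The plan is to construct the control $u$ in three consecutive stages, concatenating the three preparatory results of this section and of Section~\ref{section_consensus}: first gather the agents tightly around the leader with the consensus control, then drag the whole cluster to the target $\bar{x}$ by moving the leader, and finally switch the control off and let the tight cluster relax onto $\bar{x}$. At each transition I would re-initialize the problem, taking the state on the previous $\bar\tau$-window as the new continuous initial datum; this is legitimate because the well-posedness theory is set in the space of continuous initial data on $[-\bar\tau,0]$, and the admissibility of the concatenated $u$ only requires a single finite bound on its $L^\infty$ norm.

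\textbf{Stage 1 (gathering).} On a first interval I would use the consensus control \eqref{control}. Since the delay bound \eqref{cond_tau} is assumed, the consensus estimate \eqref{tesi_teo1} gives $d_0(t)\le Ce^{-\beta^* t}\to 0$, so I can pick a time $T_1\ge\bar\tau$ so large that $d_0(t)\le \delta/4$ for every $t$ in the window $[T_1-\bar\tau,T_1]$; thus on that window all agents lie within $\delta/4$ of the leader's position $y:=x_0(T_1)$. \textbf{Stage 2 (dragging).} Resetting the clock at $T_1$, I would apply the steering control \eqref{control2} of Lemma~\ref{lemma5} with $\xi=\bar{x}$, which brings the leader from $y$ to $\bar{x}$ in finite time $t_0=|y-\bar{x}|/M$ and, by \eqref{passaggio}, keeps all agents in the ball of radius $\tr$ about $\bar{x}$. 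Once the leader has reached $\bar{x}$ I would keep the control off so the leader remains parked at $\bar{x}$ on a full subsequent delay-window, making the interaction terms see the frozen value $x_0(\cdot-\tau)=\bar{x}$.

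\textbf{Stage 3 (relaxation).} Provided that at the end of Stage 2 every agent lies within $\delta/2$ of $\bar{x}$ on a $\bar\tau$-window with the leader constant at $\bar{x}$, I would set $u\equiv 0$ and invoke Lemma~\ref{lemma4} with $x_0=\bar{x}$: assumption \eqref{delta_ip} then yields $|x_i(t)-\bar{x}|\le Ce^{-Kt}\to 0$ for each $i$, as in \eqref{tesi_lemma4}, which together with $x_0\equiv\bar{x}$ establishes \eqref{tesi_teorema2}.

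The delicate point, and the place where a naive concatenation breaks down, is precisely the transition from Stage 2 to Stage 3: Lemma~\ref{lemma5} only bounds $\max_i|x_i-\bar{x}|$ by $\tr\le \delta/4+|y-\bar{x}|$, which is generically far larger than $\delta/2$, so a fast drag would leave agents behind and Lemma~\ref{lemma4} could not be applied. The hard part is therefore to show that the cluster actually \emph{tracks} the moving leader and arrives at $\bar{x}$ still confined within the ball of radius $\delta/2$. I would handle this by exploiting that, as long as the cluster radius stays below $\delta$, all cut-offs are saturated ($a_{ij}\equiv 1$) and $\phi$ is bounded below by $\phi(\delta/2)$, so the agents are uniformly and strongly attracted to the leader; then I would choose the drag speed $M$ small enough that the tracking lag — of order $M/(\gamma\phi(\delta/2))$ together with a delay contribution of order $M\bar\tau$ — stays below $\delta/2$ throughout the (finite) dragging-and-holding time. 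Concretely, I would run a continuity/contradiction argument on $\max_i|x_i(t)-x_0(t)|$ in the spirit of Lemma~\ref{lemma1}, showing that this quantity cannot attain $\delta/2$ once $M$ is chosen suitably small, so that the cluster never exits the active-interaction regime and Stage~3 applies.
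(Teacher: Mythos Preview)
Your three--stage plan and your diagnosis of the difficulty at the Stage~2$\to$3 transition are both correct, but your proposed remedy is a genuinely different route from the paper's. The paper does \emph{not} drag the leader slowly and prove a tracking estimate; instead it discretizes the path from the consensus point $x^*$ to $\bar x$ into a finite chain $z_0=x^*,z_1,\dots,z_L=\bar x$ with $|z_k-z_{k+1}|\le\delta/4$, and then iterates the two already--proved lemmas at each hop. Starting from a configuration with $\max_i\max_{[t_k-\bar\tau,t_k]}|x_i-z_k|\le\delta/4$, one applies Lemma~\ref{lemma5} with $\xi=z_{k+1}$: the constant $\tr$ there satisfies $\tr\le\delta/4+|z_k-z_{k+1}|\le\delta/2$, so \eqref{passaggio} keeps all agents in $B_{\delta/2}(z_{k+1})$ throughout the move; then one sets $u\equiv0$ and Lemma~\ref{lemma4} (using \eqref{delta_ip}) contracts the cluster to within $\delta/4$ of $z_{k+1}$ on a full $\bar\tau$--window, and one repeats. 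After finitely many hops the leader sits at $\bar x$ and a last application of Lemma~\ref{lemma4} gives \eqref{tesi_teorema2}.

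What each approach buys: the paper's waypoint argument uses \emph{only} Lemmas~\ref{lemma4} and~\ref{lemma5} as black boxes and needs no new estimate, no smallness of the drag speed, and no tracking analysis. Your slow--drag idea is sound in principle---in the regime $a_{ij}\equiv1$, $\phi\ge\phi(\delta/2)$ one indeed obtains a differential inequality for $\max_i|x_i-x_0|$ of the Lemma~\ref{lemma2} type with an extra forcing of size the drag speed, and a Lyapunov argument like that of the consensus theorem would give a steady--state lag of the order you indicate---but carrying this out with the delay terms present amounts to proving an additional lemma that the paper avoids entirely.
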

\begin{proof}
Let us fix $\bar{x}\in\RR^d$. In Section \ref{section_consensus} we have shown that there exists an admissible control $u$ so that consensus, which we denote by $x^*\in\RR^d$, occurs. We introduce a finite sequence $\{z_k\}_{k\in\{0,1,...,L\}}$, with $L\in\nat$, such that $z_0=x^*$, $z_L=\bar{x}$ and for any $k\in\{ 0,1,...,L-1\}$,
$$
|z_k-z_{k+1}|\leq \frac{\delta}{4}.
$$
Therefore, by taking $u$ as in \eqref{control}, there exists a positive time $t_0>0$ such that
$$
\max_{i\in\{0,1,...,N\}}\max_{t\in [t_0-\bar\tau,t_0]} |x_i(t)-x^*| \leq \frac{\delta}{4}.
$$
Now, for $t\geq t_0$, consider the control
$$
u(t)=M\frac{z_1-x_0(t)}{|z_1-x_0(t)|}.
$$
Hence, by Lemma \ref{lemma5}, there exists another time $t_{01}>t_0$ such that $x_0(t_{01})=z_1$ and
$$
|x_i(t)-z_1|\leq \frac{\delta}{2},
$$
for any $i\in\{1,...,N\}$ and $t\in[t_0,t_{01}]$. At this point, take $u(t)=0$ for $t\geq t_{01}$. Then, it's possible to apply Lemma \ref{lemma4}. Therefore, we have the existence of another time $t_1>0$ such that
$$
\max_{t\in[t_1-\bar\tau,t_1]}|x_i(t)-z_1|\leq \frac{\delta}{4},
$$
for any $i\in\{1,...,N\}$. For any $t>t_1$, take
$$
u(t)=M\frac{z_2-x_0(t)}{|z_2-x_0(t)|}.
$$
Then, we apply again Lemma \ref{lemma5}, in order to have another time $t_{11}>t_1$ such that $x_0(t_{11})=z_2$ and
$$
|x_i(t)-z_2|\leq \frac{\delta}{2},
$$
for any $i\in\{1,...,N\}$ and $t\in[t_1,t_{11}]$. Take once again $u(t)=0$ for any $t\geq t_{11}$. Then, from Lemma \ref{lemma4} there exists $t_2>0$ such that
$$
\max_{t\in[t_2-\bar\tau,t_2]}|x_i(t)-z_2|\leq \frac{\delta}{4},
$$
for any $i\in\{1,...,N\}$. Now, one can repeat the same argument so that $x_0$ reaches the values $z_k$ in finite times $t_k$ respectively for any $k\in\{1,...,L-1\}$, and for any $i\in\{1,...,N\}$,
$$
\max_{t\in[t_k-\bar\tau,t_k]}|x_i(t)-z_k|\leq \frac{\delta}{4}.
$$
Finally, there exists a time $\bar{t}>0$ such that $ x_0(\bar{t})=\bar{x}$. Take once again $u(t)=0$ for any $t\geq \bar{t}$. Then, from Lemma \ref{lemma4}
$$
\lim_{t\to +\infty} x_i(t)=\bar{x},
$$
for any $i\in\{1,...,N\}$. Therefore, we get the existence of an admissible control $u$ such that \eqref{tesi_teorema2} is satisfied.
\end{proof}

\section{Consensus for the HK model with distributed time delay}\label{sec_distr}
In this section we will prove the consensus behavior of solutions to the control system with distributed time delay \eqref{ds}. We consider the following control function:
\begin{equation}\label{control3}
u(t)=\frac{\gamma}{h(t)}\int_{t-\tau(t)}^t \beta(t-s) \alpha(t;s)\sum_{j=1}^N \phi(|x_j(s)-x_0(t)|)(x_j(s)-x_0(t))ds,
\end{equation}
where
$$
\alpha(t;s):=\frac 12 \min \left\{ \frac{\phi(|x_p(s)-x_0(t)|)}{N}, \frac{2M}{\gamma\sum_{j=1}^N |x_j(s)-x_0(t)|}\right\},
$$
for any $t\geq 0$ and $s\in[t-\tau(t),t]$ and $p\in\{1,...,N\}$ is such that
$$
|x_p(t)-x_0(t)|=d_0(t).$$
Once again, we will take, without loss of generality, $p=1$. Then, by definition of $\alpha$, $u$ is an admissible control for system \eqref{ds}. In order to prove the consensus behavior of solution, we give the following auxiliary lemmas.
\begin{Lemma}\label{lemma4.1}
Let $\{x_i\}_{i\in\{0,1,...,N\}}$ be the solution to \eqref{ds}-\eqref{initial_data_dd} with control $u(t)$ as in \eqref{control3} for any $t\geq 0$. Let $x_{i,0},x_{0,0}$ be continuous initial data on the time interval $[-\bar\tau,0]$ and define
$$
R:=\max_{i\in\{0,1,...,N\}} \max_{s\in[-\bar\tau,0]} |x_i(s)|.
$$
Then,
\begin{equation}\label{tesi_lemma4.1}
\max_{i\in\{0,1,...,N\}} |x_i(t)|\leq R,
\end{equation}
for any $t\geq 0$.
\end{Lemma}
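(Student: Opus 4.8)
The plan is to prove the invariance bound \eqref{tesi_lemma4.1} by the same continuity/contradiction scheme used for Lemma \ref{lemma1}, adapted to the distributed-delay dynamics \eqref{ds}. First I would fix $\epsilon>0$, set $R_\epsilon:=R+\epsilon$, and define the open set
$$
S_\epsilon:=\{\,t>0\ :\ \max_{i\in\{0,1,\dots,N\}}|x_i(s)|<R_\epsilon,\ \forall\,s\in[0,t)\,\}.
$$
By continuity of the solution and the fact that the initial data are bounded by $R<R_\epsilon$ on $[-\bar\tau,0]$, the set $S_\epsilon$ is nonempty. I would let $T_\epsilon:=\sup S_\epsilon$ and argue by contradiction, assuming $T_\epsilon<+\infty$, so that $\lim_{t\to T_\epsilon^-}\max_i|x_i(t)|=R_\epsilon$.

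Next I would run the differential-inequality estimate on each agent. For $i\in\{1,\dots,N\}$ and $t<T_\epsilon$ I compute $\tfrac12\tfrac{d}{dt}|x_i(t)|^2=\langle x_i(t),\tfrac{dx_i}{dt}(t)\rangle$ from \eqref{ds}, expanding each inner product as $\langle x_j(s),x_i(t)\rangle-|x_i(t)|^2$ and $\langle x_0(s),x_i(t)\rangle-|x_i(t)|^2$. The crucial point specific to the distributed case is that the relevant delayed times $s$ range over $[t-\tau(t),t]\subset[-\bar\tau,T_\epsilon)$, so whenever $s\ge 0$ we have $|x_j(s)|<R_\epsilon$ by membership in $S_\epsilon$, and when $s<0$ we use $|x_j(s)|\le R<R_\epsilon$ from the initial data; either way $|x_j(s)|\le R_\epsilon$ throughout the integration window. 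Applying Cauchy--Schwarz inside the integrals and then dividing through by the normalising factor $h(t)=\int_0^{\tau(t)}\beta(s)ds>0$, the weights $\beta(t-s)/h(t)$ integrate to $1$ over $[t-\tau(t),t]$, so they average out and I obtain, exactly as in Lemma \ref{lemma1},
$$
\frac{d}{dt}|x_i(t)|\le(\gamma+1)\,(R_\epsilon-|x_i(t)|),
$$
whence Gronwall gives $|x_i(t)|\le e^{-(\gamma+1)t}(|x_i(0)|-R_\epsilon)+R_\epsilon<R_\epsilon$ for all $t<T_\epsilon$.

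For the leader I would use the control \eqref{control3}: since $N\alpha(t;s)\le 1$ (by the first branch in the definition of $\alpha$) and $\beta(t-s)/h(t)$ again integrates to $1$, the same Cauchy--Schwarz argument bounds $\tfrac12\tfrac{d}{dt}|x_0(t)|^2\le\gamma|x_0(t)|(R_\epsilon-|x_0(t)|)$, and Gronwall yields $|x_0(t)|<R_\epsilon$ for $t<T_\epsilon$. Combining the leader and follower estimates contradicts $\lim_{t\to T_\epsilon^-}\max_i|x_i(t)|=R_\epsilon$, forcing $T_\epsilon=+\infty$; letting $\epsilon\to0$ then gives \eqref{tesi_lemma4.1}.

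The main obstacle, and the only genuine departure from the pointwise-delay proof of Lemma \ref{lemma1}, is bookkeeping the normalisation by $h(t)$ and verifying that the integral weights $\beta(t-s)/h(t)$ truly average to $1$ so that the distributed nonlinearity collapses to the same scalar differential inequality; I would also be careful that the delayed arguments $s$ can dip below $0$ into the initial-data interval, which is why the bound $|x_j(s)|\le R_\epsilon$ must be checked separately on $[-\bar\tau,0]$ and on $[0,t)$. Once these two points are handled, the Gronwall/continuation argument goes through verbatim.
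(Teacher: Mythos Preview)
Your proposal is correct and follows essentially the same argument as the paper's proof: the same $\epsilon$-perturbation and contradiction scheme, the same differential inequalities $\tfrac12\tfrac{d}{dt}|x_i|^2\le(\gamma+1)|x_i|(R_\epsilon-|x_i|)$ for followers and $\tfrac12\tfrac{d}{dt}|x_0|^2\le\gamma|x_0|(R_\epsilon-|x_0|)$ for the leader, and the same Gronwall conclusion. The only differences are cosmetic (the paper treats the leader before the followers) and that you make explicit the bookkeeping for $s<0$ and the normalisation $\int_{t-\tau(t)}^t\beta(t-s)\,ds=h(t)$, which the paper leaves implicit.
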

\begin{proof}
Let us define, as in Lemma \ref{lemma1}, $R_\epsilon:=R+\epsilon$. Set
$$
\mathcal{M}_\epsilon :=\{ t\geq 0 \ : \ \max_{i\in\{0,1,...,N\}} |x_i(s)|<R_\epsilon, \ \forall s\in[0,t)\}.
$$
By continuity, $\mathcal{M}_\epsilon\neq\emptyset$. Then, let us define $T_\epsilon:=\sup \mathcal{M}_\epsilon$, and suppose by contradiction that it is finite. Then,
\begin{equation}\label{contr0}
\lim_{t\to T_\epsilon -} \max_{i\in\{0,1,...,N\}} |x_i(t)|=R_\epsilon.
\end{equation}
On the other hand, we have that
$$
\begin{array}{l}
\displaystyle{\frac 12 \deriv |x_0(t)|^2 \leq\frac{\gamma}{h(t)} \sum_{j=1}^N \int_{t-\tau(t)}^t \beta(t-s)\alpha(t;s)\phi(|x_j(s)-x_0(t)|)\left( \langle x_j(s),x_0(t)\rangle - |x_0(t)|^2\right) ds }\\
\hspace{2.2 cm}
\displaystyle{ \leq \frac{\gamma}{h(t)}\sum_{j=1}^N\int_{t-\tau(t)}^t \beta(t-s)\alpha(t;s) \phi(|x_j(s)-x_0(t)|) |x_0(t)|\left( R_\epsilon-|x_0(t)|\right) ds.}
\end{array}
$$
Since $|x_0(t)|<R_\epsilon$ for any $t<T_\epsilon$ and using the fact that $N\alpha(t;s)\leq 1$ for any $t\geq 0$ and $s\in[t-\tau(t),t]$, we get
$$
\begin{array}{l}
\displaystyle{\frac 12 \deriv |x_0(t)|^2 \leq \gamma |x_0(t)|(R_\epsilon-|x_0(t)|),}
\end{array}
$$
which clearly implies (by Gronwall estimate)
\begin{equation}\label{dis1}
|x_0(t)|\leq e^{-\gamma t } (|x_0(0)|-R_\epsilon) +R_\epsilon<R_\epsilon,
\end{equation}
for any $t<T_\epsilon$. Furthermore, for any $i\in\{1,...,N\}$ and $t<T_\epsilon$,
$$
\begin{array}{l}
\displaystyle{ \frac 12 \deriv|x_i(t)|^2 \leq \frac{1}{Nh(t)} \sum_{j\neq i} \int_{t-\tau(t)}^t \beta(t-s) a_{ij}(t;s) \left( \langle x_j(s),x_i(t)\rangle -|x_i(t)|^2\right) ds}\\
\hspace{2.4 cm}
\displaystyle{ +\frac{\gamma}{h(t)} \int_{t-\tau(t)}^t \beta(t-s)\phi(|x_0(s)-x_i(t)|) \left( \langle x_0(s),x_i(t)\rangle -|x_i(t)|^2\right) ds}\\
\hspace{2.1 cm}
\displaystyle{ \leq \frac{1}{Nh(t)} \sum_{j\neq i} \int_{t-\tau(t)}^t \beta(t-s) a_{ij}(t;s) |x_i(t)|\left( R_\epsilon -|x_i(t)|\right) ds}\\
\hspace{2.4 cm}
\displaystyle{ +\frac{\gamma}{h(t)} \int_{t-\tau(t)}^t \beta(t-s)\phi(|x_0(s)-x_i(t)|) |x_i(t)|\left( R_\epsilon -|x_i(t)|\right) ds,}\\
\end{array}
$$
where we have used once again the fact that $|x_0(t)|,|x_j(t)|<R_\epsilon $ for any $t<T_\epsilon$ and any $j\in\{1,...,N\}$. Then, we get
$$
\frac 12 \deriv |x_i(t)|^2\leq (\gamma+1) |x_i(t)|(R_\epsilon-|x_i(t)|),
$$
for any $t<T_\epsilon$. Hence, using again Gronwall estimate yields
\begin{equation}\label{dis2}
|x_i(t)|\leq e^{-(\gamma+1)t} (|x_i(0)|-R_\epsilon)+R_\epsilon<R_\epsilon.
\end{equation}
From inequalities \eqref{dis1}-\eqref{dis2}, we get
$$
\lim_{t\to T_\epsilon -} \max_{i\in\{0,1,...,N\}} |x_i(t)| <R_\epsilon,
$$
which is in contradiction with \eqref{contr0}. Then, $T_\epsilon=+\infty$. By sending $\epsilon\to 0$, we obtain \eqref{tesi_lemma4.1}.
\end{proof}
From Lemma \ref{lemma4.1} we obtain once again a bound on $\phi$ from above. Indeed, since $|x_0(t)|,|x_i(t)|\leq R$ for every $t\geq 0$ and for any $i\in\{1,...,N\}$, then
$$
\phi(|x_0(s)-x_i(t)|)\geq \phi(2R),
$$
for any $t\geq 0$ and $s\in[t-\tau(t),t]$. Also in this case, we could substitute the constant $R$ with the constant $R^*$ defined in Remark \ref{Rstar}.

The following lemma gives us an estimate on the Dini derivative of $d_0(t)$ for any $t\geq 0$, which is crucial in the proof of the consensus result.
\begin{Lemma}\label{lemma4.2}
Let $\{x_i\}_{i\in\{0,1,...,N\}}$ be the solution to \eqref{ds}-\eqref{initial_data_dd} with control $u(t)$ as in \eqref{control3} for any $t\geq 0$. If we define
$$
\lambda_\tau(t):=\frac{1}{h(t)} \int_{t-\bar\tau}^t \beta(t-s)\int_s^t \left[ \max_{j\in\{1,...,N\}} \left|\frac{d{x}_j}{d\sigma}(\sigma)\right|+|u(\sigma)|\right] d\sigma ds,
$$
for any $t\geq \bar\tau$, then
\begin{equation}\label{tesi_lemma4.2}
D^+d_0(t)\leq -\frac \gamma 2 \phi(2R) d_0(t)+ (\gamma LR+\gamma+1)\lambda_\tau(t),
\end{equation}
for any $t\geq \bar\tau$, where $L$ is the Lipschitz constant of $\phi(\cdot)$.
\end{Lemma}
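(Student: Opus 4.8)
The plan is to run the argument of Lemma \ref{lemma2} verbatim, replacing every pointwise evaluation at $t-\tau(t)$ by the weighted distributed average $\tfrac{1}{h(t)}\int_{t-\tau(t)}^t\beta(t-s)(\cdot)\,ds$, and exploiting the normalization $\tfrac{1}{h(t)}\int_{t-\tau(t)}^t\beta(t-s)\,ds=1$ that follows from the definition of $h$. Fix $t\ge\bar\tau$ and assume without loss of generality $d_0(t)=|x_1(t)-x_0(t)|$, so that $\tfrac12 D^+ d_0(t)^2=\langle x_1(t)-x_0(t),\tfrac{d x_1}{dt}(t)-u(t)\rangle$. After inserting \eqref{ds} and \eqref{control3}, I would split each delayed state under the integrals into an instantaneous part plus a delay error: $x_j(s)-x_1(t)=(x_j(s)-x_j(t))+(x_j(t)-x_0(t))-(x_1(t)-x_0(t))$ in the interaction term, $x_0(s)-x_1(t)=(x_0(s)-x_0(t))-(x_1(t)-x_0(t))$ in the leader term acting on agent $1$, and $x_j(s)-x_0(t)=(x_j(s)-x_j(t))+(x_j(t)-x_0(t))$ in the control term.

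The delay errors are handled as in Lemma \ref{lemma2}: for $s\in[t-\tau(t),t]\subset[t-\bar\tau,t]$ one has $|x_j(s)-x_j(t)|\le\int_s^t\max_k|\tfrac{d x_k}{d\sigma}(\sigma)|\,d\sigma$ and $|x_0(s)-x_0(t)|\le\int_s^t|u(\sigma)|\,d\sigma$. Using $a_{1j}(t;s)\le1$, $\phi\le\phi(0)=1$, $N\alpha(t;s)\le\tfrac12$, and the Cauchy--Schwarz bound $|\langle v,x_1(t)-x_0(t)\rangle|\le|v|\,d_0(t)$, each weighted integral of a delay error is bounded by $d_0(t)$ times $\tfrac{1}{h(t)}\int_{t-\tau(t)}^t\beta(t-s)\int_s^t[\max_k|\tfrac{d x_k}{d\sigma}|+|u|]\,d\sigma\,ds$; enlarging the outer integration from $[t-\tau(t),t]$ to $[t-\bar\tau,t]$, which is legitimate as the integrand is nonnegative, reproduces exactly $\lambda_\tau(t)$. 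Summing the interaction, leader and control contributions gives a bound $(\gamma+1)\lambda_\tau(t)\,d_0(t)$, the analogue of $(\gamma+1)\sigma_\tau(t)$ in the pointwise case.

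For the instantaneous part the decisive observation is that the vectors $x_j(t)-x_0(t)$ and $x_1(t)-x_0(t)$ do not depend on $s$, so they factor out of the $s$-integrals, leaving $s$-dependent scalar weights. Introducing $A_j:=\tfrac{1}{Nh(t)}\int_{t-\tau(t)}^t\beta(t-s)a_{1j}(t;s)\,ds$ and $C_j:=\tfrac{\gamma}{h(t)}\int_{t-\tau(t)}^t\beta(t-s)\alpha(t;s)\phi(|x_j(s)-x_0(t)|)\,ds$, the interaction-plus-control instantaneous terms collapse to $\sum_{j=1}^N(A_j-C_j)\langle x_j(t)-x_0(t),x_1(t)-x_0(t)\rangle-\sum_{j=1}^N A_j\,d_0(t)^2$. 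Since $|x_j(t)-x_0(t)|\le d_0(t)=|x_1(t)-x_0(t)|$, the inner products are at most $d_0(t)^2$ in absolute value, so this is $\le\sum_j(|A_j-C_j|-A_j)d_0(t)^2\le\sum_j C_j\,d_0(t)^2$ by the same two-case sign argument used for $b_j$ in Lemma \ref{lemma2}. Finally $\sum_j C_j\le\tfrac{\gamma}{2h(t)}\int_{t-\tau(t)}^t\beta(t-s)\phi(|x_1(s)-x_0(t)|)\,ds$, because $\alpha(t;s)\le\tfrac{\phi(|x_1(s)-x_0(t)|)}{2N}$ and $\sum_j\phi(|x_j(s)-x_0(t)|)\le N$.

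It then remains to match this against the negative instantaneous leader term $-\tfrac{\gamma}{h(t)}\int_{t-\tau(t)}^t\beta(t-s)\phi(|x_0(s)-x_1(t)|)\,ds\;d_0(t)^2$. Writing $\tfrac{\gamma}{2}\phi(|x_1(s)-x_0(t)|)-\gamma\phi(|x_0(s)-x_1(t)|)=\tfrac{\gamma}{2}[\phi(|x_1(s)-x_0(t)|)-\phi(|x_0(s)-x_1(t)|)]-\tfrac{\gamma}{2}\phi(|x_0(s)-x_1(t)|)$ and using the lower bound $\phi(|x_0(s)-x_1(t)|)\ge\phi(2R)$ established after Lemma \ref{lemma4.1} together with $\tfrac{1}{h(t)}\int_{t-\tau(t)}^t\beta(t-s)\,ds=1$, the last summand yields $-\tfrac{\gamma}{2}\phi(2R)d_0(t)^2$. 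The Lipschitz bracket is controlled through $\big|\,|x_1(s)-x_0(t)|-|x_0(s)-x_1(t)|\,\big|\le|x_1(s)-x_0(t)+x_0(s)-x_1(t)|=|(x_1(s)-x_1(t))+(x_0(s)-x_0(t))|$, which again feeds the derivative integrals into $\lambda_\tau(t)$ and produces $\tfrac{\gamma L}{2}\lambda_\tau(t)d_0(t)^2$; trading one factor $d_0(t)$ for $2R$ via \eqref{tesi_lemma4.1} turns this into $\gamma LR\,\lambda_\tau(t)d_0(t)$. Collecting all pieces gives $\tfrac12 D^+ d_0(t)^2\le-\tfrac{\gamma}{2}\phi(2R)d_0(t)^2+(\gamma LR+\gamma+1)\lambda_\tau(t)d_0(t)$, and dividing by $d_0(t)$ (the case $d_0(t)=0$ being trivial) yields \eqref{tesi_lemma4.2}. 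I expect the only real difficulty to be organizational: keeping the nested weighted integrals aligned while factoring the $s$-independent vectors, and checking that the outer enlargement $[t-\tau(t),t]\to[t-\bar\tau,t]$ and the normalization $h(t)^{-1}\int\beta=1$ regenerate $\lambda_\tau(t)$ with precisely the coefficient $\gamma LR+\gamma+1$ inherited from the pointwise Lemma \ref{lemma2}.
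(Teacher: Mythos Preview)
Your proposal is correct and follows essentially the same route as the paper's own proof: the same splitting of each delayed vector into an instantaneous part and a delay error, the same $(\gamma+1)\lambda_\tau(t)$ bookkeeping for the errors, the same sign argument on the combined interaction/control weights, the bound $N\alpha(t;s)\le\tfrac12\phi(|x_1(s)-x_0(t)|)$, and the Lipschitz comparison of $\phi(|x_1(s)-x_0(t)|)$ against $\phi(|x_0(s)-x_1(t)|)$ together with the normalization $\tfrac{1}{h(t)}\int_{t-\tau(t)}^t\beta(t-s)\,ds=1$. The only cosmetic difference is that you first integrate in $s$ to form the scalar weights $A_j,C_j$ and apply the two-case estimate $|A_j-C_j|-A_j\le C_j$ to those, whereas the paper keeps the quantities $b_j=b_j(s)$ inside the $s$-integral and performs the sign argument pointwise in $s$; both yield the identical bound $\sum_j C_j\,d_0(t)^2$.
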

\begin{proof} Assume, as above, $d_0(t)= \vert x_1(t)-x_0(t)\vert.$
From \eqref{ds} we immediately have
$$
\begin{array}{l}
\displaystyle{\frac 12 D^+ |x_1(t)-x_0(t)|^2\leq \frac{1}{Nh(t)} \sum_{j\neq 1 }\int_{t-\tau(t)}^t \beta (t-s) a_{1j}(t;s) \langle x_j(s)-x_1(t),x_1(t)-x_0(t)\rangle ds}\\
\hspace{2.4 cm}
\displaystyle{+\frac{\gamma}{h(t)} \int_{t-\tau(t)}^t \beta (t-s) \phi(|x_0(s)-x_1(t)|) \langle x_0(s)-x_1(t),x_1(t)-x_0(t)\rangle ds}\\
\hspace{2.4 cm}
\displaystyle{-\frac{\gamma}{h(t)}\sum_{j=1}^N \int_{t-\tau(t)}^t \beta(t-s)\alpha(t;s) \phi(|x_j(s)-x_0(t)|)\langle x_j(s)-x_0(t),x_1(t)-x_0(t)\rangle ds.}
\end{array}$$
By using similar argument as in Lemma \ref{lemma2}, we get
$$
\begin{array}{l}
\displaystyle{\frac 12 D^+ |x_1(t)-x_0(t)|^2\leq\frac{1}{Nh(t)} \sum_{j\neq 1 }\int_{t-\tau(t)}^t \beta (t-s) a_{1j}(t;s) \langle x_j(s)-x_j(t),x_1(t)-x_0(t)\rangle ds}\\
\hspace{2.4 cm}
\displaystyle{+\frac{1}{Nh(t)} \sum_{j\neq 1 }\int_{t-\tau(t)}^t \beta (t-s) a_{1j}(t;s) ds \langle x_j(t)-x_1(t),x_1(t)-x_0(t)\rangle}\\
\hspace{2.4 cm}
\displaystyle{ +\frac{\gamma}{h(t)} \int_{t-\tau(t)}^t \beta (t-s) \phi(|x_0(s)-x_1(t)|) \langle x_0(s)-x_0(t),x_1(t)-x_0(t)\rangle ds}\\
\hspace{2.4 cm}
\displaystyle{-\frac{\gamma}{h(t)} \int_{t-\tau(t)}^t \beta (t-s) \phi(|x_0(s)-x_1(t)|) ds | x_0(t)-x_1(t)|^2 }\\
\hspace{2.5 cm}
\displaystyle{-\frac{\gamma}{h(t)}\sum_{j=1}^N \int_{t-\tau(t)}^t \beta(t-s)\alpha(t;s) \phi(|x_j(s)-x_0(t)|)\langle x_j(s)-x_j(t),x_1(t)-x_0(t)\rangle ds}\\
\hspace{2.4 cm}
\displaystyle{-\frac{\gamma}{h(t)}\sum_{j=1}^N \int_{t-\tau(t)}^t \beta(t-s)\alpha(t;s) \phi(|x_j(s)-x_0(t)|)ds\langle x_j(t)-x_0(t),x_1(t)-x_0(t)\rangle .}
\end{array}
$$
Since $a_{1j}(t;s)\leq 1 $ for any $ j\in\{1,...,N\}$ and $t\geq 0$, then
$$
\begin{array}{l}
\displaystyle{ \frac 12 D^+ |x_1(t)-x_0(t)|^2\leq (\gamma+1) \lambda_\tau(t)-\frac{\gamma}{h(t)}\int_{t-\tau(t)}^t \beta(t-s)\phi(|x_0(s)-x_1(t)|)ds |x_1(t)-x_0(t)|^2}\\

\displaystyle{+\frac{1}{h(t)} \sum_{j=1}^N \int_{t-\tau(t)}^t \beta(t-s) \left( \left| \frac 1N a_{1j}(t;s)-\gamma\alpha(t;s)\phi(|x_j(s)-x_0(t)|)\right| -\frac 1 N a_{1j}(t;s)\right) ds |x_1(t)-x_0(t)|^2.}\\
\end{array}
$$
Define for any $j\in\{1,...,N\}$ the following quantity:
$$
b_j:=\left| \frac 1N a_{1j}(t;s)-\gamma\alpha(t;s)\phi(|x_j(s)-x_0(t)|)\right| -\frac 1 N a_{1j}(t;s).
$$
If $j$ is such that
$$
\frac 1N a_{1j}(t;s)-\gamma\alpha(t;s)\phi(|x_j(s)-x_0(t)|)\geq 0,
$$
then $b_j\leq 0$. Otherwise, we obtain
$$
b_j\leq \gamma \alpha(t;s)\phi(|x_j(s)-x_0(t)|.
$$
Therefore, we can deduce
$$
\begin{array}{l}
\displaystyle{ \frac 12 D^+|x_1(t)-x_0(t)|^2 \leq (\gamma+1)\lambda_\tau(t)-\frac{\gamma}{h(t)}\int_{t-\tau(t)}^t \beta(t-s)\phi(|x_0(s)-x_1(t)|)ds |x_1(t)-x_0(t)|^2}\\
\hspace{3.5 cm}
\displaystyle{+\frac{\gamma}{h(t)}\int_{t-\tau(t)}^t N\beta(t-s)\alpha(t;s)ds |x_1(t)-x_0(t)|^2.}
\end{array}
$$
Now, we observe that $N\alpha(t;s)\leq \frac 1 2 \phi(|x_1(s)-x_0(t)|)$, for any $t\geq 0$. Therefore,
$$
\begin{array}{l}
\displaystyle{\frac 12 D^+|x_1(t)-x_0(t)|^2 \leq (\gamma+1)\lambda_\tau(t) -\frac \gamma 2 \phi(2R) |x_1(t)-x_0(t)|^2}\\
\hspace{2 cm}
\displaystyle{+\frac{\gamma}{2h(t)}\int_{t-\tau(t)}^t \beta(t-s)\left|\phi(|x_0(s)-x_1(t)|)-\phi(|x_1(s)-x_0(t)|)\right| ds \,|x_1(t)-x_0(t)|^2.}
\end{array}
$$
Since $\phi(\cdot)$ is Lipschitz and $|x_1(t)-x_0(t)|\leq 2R$ for any $t\geq 0$, we get
$$
\begin{array}{l}
\displaystyle{ \frac 12 D^+|x_1(t)-x_0(t)|^2 \leq -\frac \gamma 2 \phi(2R)|x_1(t)-x_0(t)|^2+(\gamma RL+\gamma+1)\lambda_\tau(t)|x_1(t)-x_0(t)|,}
\end{array}
$$
which immediately implies \eqref{tesi_lemma4.2}. Hence, the lemma is proved.
\end{proof}
The following last technical lemma of this section holds.
\begin{Lemma}\label{lemma4.3}
Let $\{ x_i\}_{i\in\{0,1,...,N\}}$ be the solution to \eqref{ds}-\eqref{initial_data_dd} with control $u(t)$ as in \eqref{control3} for any $t\geq 0$. Then,
\begin{equation}\label{tesi1_lemma4.3}
\max_{i\in\{1,...,N\}} \left|\frac{d{x}_i}{dt}(t)\right|\leq (2+\gamma)d_0(t)+(1+\gamma)\lambda_\tau(t),
\end{equation}
for any $t\geq \bar\tau$. Moreover,
\begin{equation}\label{tesi2_lemma4.3}
\left|\frac{d{x}_0}{dt}(t)\right|\leq \gamma d_0(t)+\gamma\lambda_\tau(t),
\end{equation}
for any $t\geq \bar\tau$.
\end{Lemma}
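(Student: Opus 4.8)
The plan is to follow closely the proof of Lemma \ref{lemma3}, replacing the pointwise quantity $\sigma_\tau$ by its distributed counterpart $\lambda_\tau$ and carrying the weight $\beta(t-s)$ and the normalization $1/h(t)$ through every estimate. I would start with the leader's inequality \eqref{tesi2_lemma4.3}. Since $\deriv x_0=u$ with $u$ as in \eqref{control3}, I take absolute values inside the integral and use the splitting $x_j(s)-x_0(t)=\bigl(x_j(s)-x_j(t)\bigr)+\bigl(x_j(t)-x_0(t)\bigr)$. The second piece is bounded by $d_0(t)$, while the first is controlled by $\int_s^t\max_j|\frac{dx_j}{d\sigma}(\sigma)|\,d\sigma$. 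Together with $\phi\leq\phi(0)=1$ and the bound $N\alpha(t;s)\leq1$, these reduce $|u(t)|$ to a sum of two weighted integrals: the $d_0$-term collapses because $\frac1{h(t)}\int_{t-\tau(t)}^t\beta(t-s)\,ds=1$, giving exactly $\gamma d_0(t)$, and the velocity term is bounded by $\gamma\lambda_\tau(t)$.

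For \eqref{tesi1_lemma4.3} I would estimate each $|\frac{dx_i}{dt}(t)|$, $i\geq1$, directly from \eqref{ds} by the triangle inequality, using the two telescoping decompositions $x_j(s)-x_i(t)=(x_j(s)-x_j(t))+(x_j(t)-x_0(t))+(x_0(t)-x_i(t))$ for the interaction term and $x_0(s)-x_i(t)=(x_0(s)-x_0(t))+(x_0(t)-x_i(t))$ for the leader term. Using $a_{ij}(t;s)\leq1$ and $\phi\leq1$, each displacement $|x_j(t)-x_0(t)|$ and $|x_0(t)-x_i(t)|$ is $\leq d_0(t)$, while each time-shift $|x_j(s)-x_j(t)|$ and $|x_0(s)-x_0(t)|$ is bounded by the inner integral $\int_s^t$ of the corresponding velocity (recalling that $|\frac{dx_0}{d\sigma}|=|u|$, which is precisely the term appearing inside $\lambda_\tau$). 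Collecting terms, the three $d_0$-contributions sum to at most $(2+\gamma)d_0(t)$ and the two velocity-contributions to at most $(1+\gamma)\lambda_\tau(t)$, after which taking the maximum over $i$ yields \eqref{tesi1_lemma4.3}.

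The only genuine subtlety, and where I expect to spend the most care, is the bookkeeping forced by the distributed structure: the dynamics integrate over $[t-\tau(t),t]$ whereas $\lambda_\tau$ is defined over $[t-\bar\tau,t]$, so after taking absolute values I would extend every lower limit from $t-\tau(t)$ to $t-\bar\tau$, which is legitimate because $\tau(t)\leq\bar\tau$, $\beta\geq0$, and the integrands are nonnegative. One must also track that the weight $\beta(t-s)$ and the factor $1/h(t)$ appear consistently, so that the normalization $\frac1{h(t)}\int_{t-\tau(t)}^t\beta(t-s)\,ds=1$ can be invoked to turn the terms that are constant in $s$ into clean coefficients, with $N\alpha(t;s)\leq1$ playing the same role it did in Lemma \ref{lemma3}. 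No new analytic ingredient beyond these is needed, so once the accounting is set up both inequalities follow exactly as in the pointwise case.
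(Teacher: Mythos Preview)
Your proposal is correct and follows essentially the same route as the paper's proof: the paper performs the identical splittings $x_j(s)-x_i(t)=(x_j(s)-x_j(t))+(x_j(t)-x_i(t))$ and $x_0(s)-x_i(t)=(x_0(s)-x_0(t))+(x_0(t)-x_i(t))$, uses $a_{ij}\le1$, $\phi\le1$, $N\alpha(t;s)\le1$, the normalization $\frac1{h(t)}\int_{t-\tau(t)}^t\beta(t-s)\,ds=1$, and bounds $|x_j(t)-x_i(t)|\le 2d_0(t)$ via $x_0(t)$ exactly as you outline. Your explicit remark about extending the outer integral from $[t-\tau(t),t]$ to $[t-\bar\tau,t]$ is a point the paper leaves implicit but is indeed the reason the bounds match $\lambda_\tau$.
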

\begin{proof}
From \eqref{ds} we immediately have for any $i\in\{1,...,N\}$,
$$
\begin{array}{l}
\displaystyle{\left|\frac{d{x}_i}{dt}(t)\right|\leq \frac{1}{Nh(t)}\sum_{j\neq i} \int_{t-\tau(t)}^t \beta (t-s) |x_j(s)-x_i(t)|ds+\frac{\gamma}{h(t)} \int_{t-\tau(t)}^t \beta(t-s) |x_0(s)-x_i(t)|ds}\\
\hspace{1.45 cm}
\displaystyle{\leq \frac{1}{Nh(t)}\sum_{j\ne i}\int_{t-\tau(t)}^t \beta(t-s) |x_j(s)-x_j(t)|ds + \frac 1 N\sum_{j\ne i}|x_j(t)-x_i(t)|}\\
\hspace{3 cm}
\displaystyle{+\frac{\gamma}{h(t)}\int_{t-\tau(t)}^t \beta(t-s) |x_0(s)-x_0(t)| ds+\gamma |x_0(t)-x_i(t)|.}
\end{array}
$$
Then, from the definition of $\lambda_\tau$ and using the fact that $|x_0(t)-x_i(t)|\leq d_0(t)$ for any $t\geq 0$, we get
$$
\left|\frac{d{x}_i}{dt}(t)\right|\leq (\gamma+1)\lambda_\tau(t)+(\gamma+2)d_0(t).
$$
By taking the maximum function, we obtain \eqref{tesi1_lemma4.3}. Moreover,
$$
\begin{array}{l}
\displaystyle{\left|\frac{d{x}_0}{dt}(t)\right|\leq \frac{\gamma}{h(t)} \int_{t-\tau(t)}^t \beta(t-s)\alpha(t;s)\sum_{j=1}^N |x_j(s)-x_j(t)|ds+\gamma d_0(t)}\\
\hspace{1.1 cm}
\displaystyle{ \leq \gamma \lambda_\tau(t)+\gamma d_0(t),}
\end{array}
$$
obtaining \eqref{tesi2_lemma4.3}.
\end{proof}
Now, we are ready to prove the main theorem of this section.
\begin{Theorem}\label{teorema3}
Let $\{x_i\}_{i\in\{0,1,...,N\}}$ be the solution to \eqref{ds}-\eqref{initial_data_dd} with control $u(t)$ as in \eqref{control3} for any $t\geq 0$. Define
$$
B:=\int_0^{\bar\tau} \beta(s)ds.
$$
If the maximal time delay satisfies the following inequality
\begin{equation}\label{delay_restr}
\bar\tau < \ln \left( 1+\frac{\gamma\phi(2R)}{4R_\gamma B (1+\gamma)+\gamma\phi(2R) B (1+2\gamma)}\right),
\end{equation}
with $R_\gamma:=\gamma LR+\gamma+1$, then the solution tends towards consensus, namely there exist two positive constants $C,K>0$ such that
\begin{equation}\label{tesi_teo3}
|x_i(t)-x_0(t)|\leq Ce^{-Kt},
\end{equation}
for any $i\in\{1,...,N\}$ and $t\geq 0$.
\end{Theorem}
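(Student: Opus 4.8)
The plan is to run the same Lyapunov-functional scheme used for the pointwise model in the first theorem of Section \ref{section_consensus}, now with Lemmas \ref{lemma4.1}, \ref{lemma4.2} and \ref{lemma4.3} playing the roles of Lemmas \ref{lemma1}, \ref{lemma2} and \ref{lemma3}. For $t\ge\bar\tau$ I would introduce a functional of the form
\[
L(t):=d_0(t)+\kappa\int_{t-\bar\tau}^t \beta(t-s)\,e^{-(t-s)}\int_s^t\left(\max_{j\in\{1,\dots,N\}}\left|\frac{dx_j}{d\sigma}(\sigma)\right|+\left|\frac{dx_0}{d\sigma}(\sigma)\right|\right)d\sigma\,ds,
\]
where $\kappa>0$ is a free parameter and the damping $e^{-(t-s)}$ together with the kernel $\beta$ are chosen so that, upon differentiation, $L$ regenerates both a copy of itself (for the dissipative $-\beta^*L$ structure) and a delayed term matching the error quantity $\lambda_\tau$ of Lemma \ref{lemma4.2}. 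Since $0\le d_0(t)\le L(t)$, it suffices to prove $D^+L(t)\le-\beta^*L(t)$ for some $\beta^*>0$ and all $t\ge\bar\tau$: this yields $L(t)\le L(\bar\tau)e^{-\beta^*(t-\bar\tau)}$, and the bound $d_0\le 2R$ on $[0,\bar\tau]$ coming from Lemma \ref{lemma4.1} lets me absorb the initial window into the constant, giving \eqref{tesi_teo3} on all of $[0,\infty)$.

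First I would compute $D^+L$. Writing $g(\sigma):=\max_j|\frac{dx_j}{d\sigma}(\sigma)|+|\frac{dx_0}{d\sigma}(\sigma)|$ and differentiating the double integral (after the substitution $w=t-s$, so that the kernel need not be differentiated), I expect a positive velocity term $\kappa\,c\,g(t)$ whose weight $c$ carries the total mass $B=\int_0^{\bar\tau}\beta$, a negative delayed term that the exponential weight lets me bound above by $-\kappa e^{-\bar\tau}\lambda_\tau(t)$, and a further negative multiple of the integral part of $L$. I would then substitute the Dini estimate \eqref{tesi_lemma4.2} for $D^+d_0(t)$ and the velocity bounds \eqref{tesi1_lemma4.3}--\eqref{tesi2_lemma4.3} for $g(t)$, so that every contribution is written through $d_0(t)$, $\lambda_\tau(t)$ and the integral part of $L$. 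Collecting the terms multiplying $d_0$ and $\lambda_\tau$, I expect to reach an inequality of the form
\[
D^+L(t)\le\left[-\frac{\gamma}{2}\phi(2R)+2\kappa(1-e^{-\bar\tau})B(1+\gamma)\right]d_0(t)+\left[R_\gamma-\kappa e^{-\bar\tau}+\kappa(1-e^{-\bar\tau})B(1+2\gamma)\right]\lambda_\tau(t)-\kappa\,I(t),
\]
where $I(t)\ge0$ is the integral part of $L$ and $R_\gamma=\gamma LR+\gamma+1$.

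It then remains a scalar optimization in $\kappa$: the first bracket is negative provided $\kappa<\frac{\gamma\phi(2R)}{4(1+\gamma)(1-e^{-\bar\tau})B}$, and the second is nonpositive provided $\kappa\ge\frac{R_\gamma}{e^{-\bar\tau}-(1+2\gamma)(1-e^{-\bar\tau})B}$, the latter requiring its denominator to be positive. A compatible $\kappa$ exists exactly when
\[
\frac{R_\gamma}{e^{-\bar\tau}-(1+2\gamma)(1-e^{-\bar\tau})B}<\frac{\gamma\phi(2R)}{4(1+\gamma)(1-e^{-\bar\tau})B},
\]
and cross-multiplying and solving for $\bar\tau$ returns precisely the threshold \eqref{delay_restr}. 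Fixing such a $\kappa$, both brackets acquire the correct sign, the remaining $-\kappa I(t)$ absorbs the integral part of $L$, and one obtains $D^+L\le-\beta^*L$ with $\beta^*>0$, which provides the rate $K$ in \eqref{tesi_teo3}.

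The step I expect to be the real obstacle is the differentiation: unlike the pointwise case, where the cancellation term is simply the boundary value $\int_s^t g\,|_{s=t-\bar\tau}=\sigma_\tau(t)$, here the error produced by Lemmas \ref{lemma4.2}--\ref{lemma4.3} is the genuinely $\beta$-weighted and $h(t)$-normalized double integral $\lambda_\tau(t)=\frac{1}{h(t)}\int_{t-\bar\tau}^t\beta(t-s)\int_s^t g\,d\sigma\,ds$. I must therefore engineer the weight in $L$ so that $D^+L$ regenerates $\lambda_\tau(t)$ with exactly the clean total mass $B$ rather than an $h(t)$-dependent constant; the useful facts here are the identity $h(t)\lambda_\tau(t)=\int_{t-\bar\tau}^t\beta(t-s)\int_s^t g\,d\sigma\,ds$, the pointwise bound $e^{-(t-s)}\ge e^{-\bar\tau}$ on the integration window, and $\int_{t-\bar\tau}^t\beta(t-s)\,ds=B$. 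Matching these constants is what introduces the extra factor $B$ in \eqref{delay_restr} relative to the pointwise condition \eqref{cond_tau}, and is the only place where the distributed structure genuinely enters; everything after the differential inequality is the same elementary parameter count as in Section \ref{section_consensus}.
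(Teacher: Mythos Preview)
Your overall plan—mirror the Lyapunov argument of Section \ref{section_consensus} with Lemmas \ref{lemma4.1}--\ref{lemma4.3} in place of Lemmas \ref{lemma1}--\ref{lemma3}, reach a differential inequality in $d_0$ and $\lambda_\tau$, and then optimise the free parameter to recover \eqref{delay_restr}—is exactly what the paper does, and your scalar optimisation at the end is correct line for line.

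The gap is the Lyapunov functional itself. Your $L$ carries only a \emph{double} integral: after the substitution $w=t-s$ its integral part is
\[
J(t)=\int_0^{\bar\tau}\beta(w)\,e^{-w}\!\int_{t-w}^t g(\sigma)\,d\sigma\,dw,
\]
and differentiating gives only \emph{two} terms,
\[
\frac{d}{dt}J(t)=g(t)\int_0^{\bar\tau}\beta(w)e^{-w}\,dw-\int_0^{\bar\tau}\beta(w)e^{-w}g(t-w)\,dw.
\]
Neither is $\lambda_\tau(t)$ (the second is a single $\beta$-weighted evaluation of $g$, not the double integral $\frac{1}{h(t)}\int_0^{\bar\tau}\beta(w)\int_{t-w}^t g$), and there is no $-J(t)$ term to regenerate the integral part of $L$. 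So the three-term structure you anticipate never appears, and the positive contribution $R_\gamma\lambda_\tau(t)$ from Lemma \ref{lemma4.2} cannot be absorbed. Your own diagnosis in the last paragraph is on target: the obstacle is precisely that $\lambda_\tau$ is already a $\beta$-weighted double integral of $g$, one integral more than $\sigma_\tau$ in the pointwise case, and the Lyapunov functional must gain one integral accordingly.

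The fix, which is what the paper uses, is the \emph{triple} integral
\[
\mathcal{L}(t)=d_0(t)+\mu\int_0^{\bar\tau}\beta(s)\int_{t-s}^{t} e^{-(t-\sigma)}\int_\sigma^{t} g(\rho)\,d\rho\,d\sigma\,ds.
\]
Here the outer $\int_0^{\bar\tau}\beta(s)\,ds$ supplies the mass $B$, while the inner block $\int_{t-s}^t e^{-(t-\sigma)}\int_\sigma^t g$ is exactly the pointwise functional of Section \ref{section_consensus} with $\bar\tau$ replaced by $s$. Differentiating this inner block and then integrating against $\beta(s)\,ds$ produces precisely the three pieces you expected: a positive $g(t)\int_0^{\bar\tau}\beta(s)(1-e^{-s})\,ds\le B(1-e^{-\bar\tau})g(t)$, a negative $-\int_0^{\bar\tau}\beta(s)e^{-s}\int_{t-s}^t g\,d\rho\,ds$ which, via $e^{-s}\ge e^{-\bar\tau}$, dominates $-e^{-\bar\tau}$ times the quantity $h(t)\lambda_\tau(t)$, and $-1$ times the triple-integral part of $\mathcal{L}$. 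From that point your bracket inequalities and the derivation of \eqref{delay_restr} go through verbatim.
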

\begin{proof}
Let us define the following Lyapunov functional
$$
\mathcal{L}(t):=d_0(t)+\mu \int_0^{\bar\tau} \beta(s)\int_{t-s}^t e^{-(t-\sigma)} \int_\sigma^t \left( \max_{j\in\{1,...,N\}} \left|\frac{d{x}_j}{d\rho}(\rho)\right|+\left|\frac{d{x}_0}{d\rho}(\rho)\right|\right)d\rho d\sigma ds,\quad t\ge\bar\tau,
$$
where $\mu>0$ is a constant which will be determined later. By differentiating $\mathcal{L}$ in time, we get
$$
\begin{array}{l}
\displaystyle{D^+ \mathcal{L}(t) = D^+ d_0(t)-\mu \int_0^{\bar\tau}\beta (s) e^{-s}\int_{t-s}^t \left( \max_{j\in\{1,...,N\}} \left|\frac{d{x}_j}{d\rho}(\rho)\right|+\left|\frac{d{x}_0}{d\rho}(\rho)\right|\right) d\rho ds }\\
\hspace{1.7 cm}
\displaystyle{-\mu \int_0^{\bar\tau} \beta(s)\int_{t-s}^t e^{-(t-\sigma)} \int_{\sigma}^t \left( \max_{j\in\{1,...,N\}} \left|\frac{d{x}_j}{d\rho}(\rho)\right|+\left|\frac{d{x}_0}{d\rho}(\rho)\right|\right)d\rho d\sigma ds}\\
\hspace{1.7 cm}
\displaystyle{+\mu\left(  \max_{j\in\{1,...,N\}} \left|\frac{d{x}_j}{dt}(t)\right|+\left|\frac{d{x}_0}{dt}(t)\right|  \right)\int_0^{\bar\tau} \beta (s) \left( 1-e^{-s}\right) ds.}
\end{array}
$$
From Lemma \ref{lemma4.2} and Lemma \ref{lemma4.3} and using the fact that $e^{-s}\geq e^{-\bar\tau}$ for any $s\in[0,\bar\tau]$, we obtain
$$
\begin{array}{l}
\displaystyle{D^+\mathcal{L}(t)\leq \left[ -\frac \gamma 2 \phi(2R) +2\mu B \left( 1-e^{-\bar\tau}\right) (\gamma+1)\right] d_0(t)}\\
\hspace{1.6 cm}
\displaystyle{+ \left[ R_\gamma -\mu e^{-\bar\tau}+\mu B \left( 1 -e^{-\bar\tau}\right) (2\gamma+1)\right] \lambda_\tau(t)}\\
\hspace{1.6 cm}
\displaystyle{-\mu \int_0^{\bar\tau} \beta(s)\int_{t-s}^t e^{-(t-\sigma)} \int_\sigma^t \left( \max_{j\in\{1,...,N\}} \left|\frac{d{x}_j}{d\rho}(\rho)\right|+\left|\frac{d{x}_0}{d\rho}(\rho)\right|\right)d\rho d\sigma ds,}
\end{array}
$$
We want to show that there exists $K>0$ such that
\begin{equation}\label{aim2}
D^+ \mathcal{L}(t)\leq -K\mathcal{L}(t),
\end{equation}
for any $t\geq \bar\tau$. To do so, we look for $\mu>0$ such that
$$
-\frac \gamma 2 \phi(2R) +2\mu B \left( 1-e^{-\bar\tau}\right) (\gamma+1) <0,
$$
and
$$
R_\gamma -\mu e^{-\bar\tau}+\mu B \left( 1 -e^{-\bar\tau}\right) (2\gamma+1)\leq 0.
$$
The first inequality is satisfied for
$$
\mu< \frac{\gamma \phi(2R)}{4B (1+\gamma)(1-e^{-\bar\tau})},
$$
while the second inequality holds true for
$$
\mu \geq \frac{R_\gamma}{e^{-\bar\tau} -B(1+2\gamma)(1-e^{-\bar\tau})},
$$
under the restriction
$$
\bar\tau < \ln \left( 1+ \frac{1}{B(1+2\gamma)} \right).
$$
In order to have the existence of $\mu $ satisfying both inequalities, we need
$$
\frac{R_\gamma}{e^{-\bar\tau} -B (1+2\gamma)(1-e^{-\bar\tau})} <  \frac{\gamma \phi(2R)}{4B (1+\gamma)(1-e^{-\bar\tau})},
$$
This is satisfied for
$$
\bar\tau < \ln \left( 1+\frac{\gamma\phi(2R)}{4R_\gamma B (1+\gamma)+\gamma\phi(2R)B (1+2\gamma)}\right),
$$
which clearly implies the previous restriction. Therefore, if $\bar\tau$ satisfies \eqref{delay_restr}, inequality \eqref{aim2} is satisfied for some $K>0$. Hence, \eqref{tesi_teo3} immediately follows.
\end{proof}
\section{Local controllability for the HK model with distributed time delay}\label{sec5}
In this section we extend the theory presented in Section \ref{section_2} to the system with distributed time delay \eqref{ds}. We will prove that for any $\bar{x}\in\RR^d$, there exists a control $u$ thanks to which all agents reach the leader's desired opinion, namely
\begin{equation}\label{iph}
\lim_{t\to +\infty} x_i(t)=\bar x,
\end{equation}
for any $i\in\{0,1,...,N\}$. To do so, we need the following lemma.
\begin{Lemma}\label{lemma5.1}
Let $\{x_i\}_{i\in\{0,1,...,N\}}$ be the solution to \eqref{ds}-\eqref{initial_data_dd} with $u(t)=0$ for any $t\geq 0$. Let us suppose that
\begin{equation}\label{N}
\max_{s\in[-\bar\tau,0]} |x_i(s)-x_0|\leq \frac \delta 2,
\end{equation}
for any $i\in\{1,...,N\}$. Moreover, assume \eqref{delta_ip}. Then, there exist two constants $C,K>0$ such that
\begin{equation}\label{tesi_lemma5.1}
|x_i(t)-x_0|\leq Ce^{-Kt},
\end{equation}
for any $i\in\{1,...,N\}$ and $t\geq 0$.
\end{Lemma}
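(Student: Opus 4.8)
The plan is to mirror the proof of Lemma \ref{lemma4}, exploiting the fact that with $u\equiv 0$ and constant leader data $x_{0,0}(s)\equiv x_0$ the leader stays put, so that $x_0(s)\equiv x_0$ for every $s\geq-\bar\tau$. Then the distributed leader term in \eqref{ds} collapses to the pointwise expression $\gamma\,\phi(|x_0-x_i(t)|)(x_0-x_i(t))$, since by definition of $h$ one has $\frac{1}{h(t)}\int_{t-\tau(t)}^t\beta(t-s)\,ds=1$. First I would establish the invariance estimate $|x_i(t)-x_0|\leq\frac{\delta}{2}$ for all $t\geq0$ and all $i$, the exact analogue of \eqref{claim}.

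To prove this claim I would argue by contradiction: if the bound were violated, continuity produces a first exit time $t^*>0$ at which $|x_i(t^*)-x_0|=\frac{\delta}{2}$ for some $i$, while $|x_j(s)-x_0|<\frac{\delta}{2}$ for all $s<t^*$ and all $j$ (and $|x_j(s)-x_0|\leq\frac{\delta}{2}$ for $s\in[-\bar\tau,0]$ by \eqref{N}), with $D^+|x_i(t^*)-x_0|\geq0$. Computing $\frac12 D^+|x_i(t^*)-x_0|^2$ from \eqref{ds}, the collapsed leader term contributes $-\gamma\,\phi(\frac{\delta}{2})(\frac{\delta}{2})^2<0$; in the interaction term I would write $x_j(s)-x_i(t^*)=(x_j(s)-x_0)-(x_i(t^*)-x_0)$ and apply Cauchy--Schwarz together with $|x_j(s)-x_0|\leq\frac{\delta}{2}$ to see that each integrand is nonpositive (the sign being preserved since $a_{ij}(t^*;s)\geq0$). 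Hence $D^+|x_i(t^*)-x_0|^2<0$, contradicting $D^+|x_i(t^*)-x_0|\geq0$, and the claim follows.

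With the invariance in hand, set $F(t):=\max_{i\in\{1,\dots,N\}}|x_i(t)-x_0|^2$ and assume without loss of generality $F(t)=|x_1(t)-x_0|^2$. The key observation is that, since all opinions lie within $\frac{\delta}{2}$ of $x_0$, every pairwise distance is $\leq\delta$, so the cut-off functions satisfy $a_{1j}(t;s)=a(|x_j(s)-x_1(t)|)=1$, while $\phi(|x_0-x_1(t)|)\geq\phi(\frac{\delta}{2})$ as $\phi$ is non-increasing. Differentiating $F$ and using these two facts, I would decompose $\langle x_j(s)-x_1(t),x_1(t)-x_0\rangle=\langle x_j(s)-x_0,x_1(t)-x_0\rangle-F(t)$ and use Cauchy--Schwarz and Young's inequality to bound $\langle x_j(s)-x_0,x_1(t)-x_0\rangle\leq\frac12 F(s)+\frac12 F(t)$. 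The only genuinely new point compared with Lemma \ref{lemma4} is the distributed integral: since the normalised weights $\frac{\beta(t-s)}{h(t)}$ integrate to $1$ over $[t-\tau(t),t]$, the resulting weighted average of $F(s)$ is dominated by $\sup_{s\in[t-\bar\tau,t]}F(s)$. After collecting terms (and discarding a favourable $-\frac{N-1}{N}F(t)$) this yields
\begin{equation*}
D^+F(t)\leq \sup_{s\in[t-\bar\tau,t]}F(s)-2\gamma\,\phi\!\left(\tfrac{\delta}{2}\right)F(t).
\end{equation*}

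Finally I would invoke the Halanay inequality exactly as in Lemma \ref{lemma4}: the estimate above is of the form $D^+F(t)\leq -aF(t)+b\sup_{[t-\bar\tau,t]}F$ with $a=2\gamma\,\phi(\frac{\delta}{2})$ and $b=1$, and exponential decay requires $a>b$, i.e. $2\gamma\,\phi(\frac{\delta}{2})>1$, which is precisely hypothesis \eqref{delta_ip}. This furnishes constants $\tilde C,\tilde K>0$ with $F(t)\leq\tilde C e^{-\tilde K t}$, and taking square roots gives \eqref{tesi_lemma5.1}. The main obstacle is the reduction of the distributed-delay memory term to the supremum form required by Halanay; once the weight normalisation $\frac{1}{h(t)}\int_{t-\tau(t)}^t\beta(t-s)\,ds=1$ is exploited, the distributed case collapses onto the pointwise argument of Lemma \ref{lemma4}.
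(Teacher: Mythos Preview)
Your proposal is correct and follows essentially the same approach as the paper's own proof: first the invariance bound $|x_i(t)-x_0|\le\frac{\delta}{2}$ by a first-exit-time contradiction, then the Halanay-type differential inequality for $F(t)=\max_i|x_i(t)-x_0|^2$ obtained via Cauchy--Schwarz and Young, with the distributed weight reduced to a supremum by the normalisation $\frac{1}{h(t)}\int_{t-\tau(t)}^t\beta(t-s)\,ds=1$. You are in fact slightly more explicit than the paper in noting that $a_{1j}(t;s)\equiv1$ once invariance is established, which the paper uses silently.
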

\begin{proof}
First we claim that if \eqref{N} is satisfied, then for any $t\geq 0$ and $i\in\{1,...,N\}$,
$$
|x_i(t)-x_0|\leq \frac \delta 2.
$$
Arguing by contradiction, let us suppose that this is not true. Hence, there exists a time $\bar t >0$ such that for some $i\in\{1,...,N\}$, $|x_i(\bar t)-x_0|>\frac \delta 2$. Then, by continuity, there exists another time $t_0\in (0,\bar t)$ such that
$$
|x_i(t_0)-x_0|=\frac \delta 2, \qquad D^+ |x_i(t_0)-x_0|\geq 0,
$$
and for any $t<t_0$ and $j\in\{1,...,N\}$, $|x_j(t)-x_0|<\frac \delta 2$. On the other hand, we have that
$$
\begin{array}{l}
\displaystyle{\frac 12 D^+|x_i(t_0)-x_0|^2=\left\langle x_i(t_0)-x_0,\frac{d{x}_i}{dt}(t_0)\right\rangle}\\
\hspace{3.15 cm}
\displaystyle{=\frac{1}{Nh(t_0)} \sum_{j\neq i} \int_{t_0-\tau(t_0)}^{t_0} \beta (t_0-s) \langle x_j(s)-x_i(t_0),x_i(t_0)-x_0\rangle ds}\\
\hspace{3.15 cm}
\displaystyle{-\gamma \phi(|x_0-x_i(t_0)|)|x_i(t_0)-x_0|^2.}
\end{array}
$$
We observe that $\langle x_j(s)-x_i(t_0),x_i(t_0)-x_0\rangle\leq 0$ for any $s\in[t_0-\tau(t_0),t_0]$. Indeed, Cauchy-Schwarz inequality yields
$$
\langle x_j(s)-x_i(t_0),x_i(t_0)-x_0\rangle\leq |x_j(s)-x_0||x_i(t_0)-x_0|-|x_i(t_0)-x_0|^2 \leq \frac \delta 2 |x_j(s)-x_0|-\left(\frac \delta 2\right) ^2 \leq 0.
$$
Therefore,
$$
\frac 12 D^+|x_i(t_0)-x_0|^2 \leq -\gamma \phi(|x_0-x_i(t_0)|)|x_i(t_0)-x_0|^2 <0,
$$
which is in contradiction with the previous assumption. Hence, the claim is proved. Now, in order to prove \eqref{tesi_lemma5.1}, we define
$$
F(t):=\max_{i\in\{1,...,N\}} |x_i(t)-x_0|^2.
$$
Without loss of generality, we can take $F(t)=|x_1(t)-x_0|^2$. Then,
$$
\begin{array}{l}
\displaystyle{ D^+ F(t) \leq \frac{2}{Nh(t)} \sum_{j\neq 1} \int_{t-\tau(t)}^t \beta(t-s) \langle x_j(s)-x_1(t),x_1(t)-x_0\rangle ds -2\gamma\phi\left( \frac \delta 2 \right) F(t)}\\
\hspace{1.45 cm}
\displaystyle{\leq \frac{2}{Nh(t)} \sum_{j\neq 1} \int_{t-\tau(t)}^t \langle x_j(s)-x_0,x_1(t)-x_0\rangle ds}\\
\hspace{4 cm}
\displaystyle{-\frac {2(N-1)}{N} F(t)-2\gamma \phi\left( \frac \delta 2 \right) F(t).}
\end{array}
$$
By Cauchy-Schwarz inequality and Young inequality, we get
$$
\begin{array}{l}
\displaystyle{ D^+F(t)\leq \frac{1}{h(t)}\int_{t-\tau(t)}^t \beta (t-s)F(s) ds-2\gamma \phi\left( \frac \delta 2 \right) F(t)}\\
\hspace{1.45 cm}
\displaystyle{\leq \sup_{s\in[t-\tau(t),t]} F(s) -2\gamma \phi\left( \frac \delta 2 \right) F(t).}
\end{array}
$$
From Halanay-type inequality (see \cite{MG}), since \eqref{delta_ip} is assumed, we get \eqref{tesi_lemma4.1}. Hence, the lemma is proved.
\end{proof}
Moreover, analogously to the pointwise time delay case, the following result can be proved.
\begin{Lemma}\label{lemma5.2}
Let $\{x_i\}_{i\in\{0,1,...,N\}}$ be the solution to \eqref{ds}-\eqref{initial_data_dd}. Let $\xi\in\RR^d$. Then, there exists a control $u(\cdot):[0,+\infty)\to\RR^d$ such that $x_0$ reaches $\xi$ in finite time, namely there exists $t_0>0$ such that $x_0(t_0)=\xi$. Furthermore, if
$$
\tr:=\max_{i\in\{0,1,...,N\}}\max_{s\in[-\bar\tau,0]} |x_i(s)-\xi|,
$$
then,
\begin{equation}\label{passaggio1}
\max_{i\in\{0,1,...,N\}} |x_i(t)-\xi|\leq \tr,
\end{equation}
for any $t\geq 0$.
\end{Lemma}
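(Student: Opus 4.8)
The plan is to follow the proof of Lemma \ref{lemma5} almost verbatim, exploiting that the leader's equation $\frac{dx_0}{dt}(t)=u(t)$ is the same in the distributed-delay system \eqref{ds} as in \eqref{main_model}. First I would take the same feedback control as in \eqref{control2}, namely $u(t)=M\frac{\xi-x_0(t)}{|\xi-x_0(t)|}$ when $x_0(t)\neq\xi$ and $u(t)=0$ otherwise, which is admissible with bound $M$. Since the leader obeys $\dot x_0=u$ regardless of the delay structure, the computation is identical to that of Lemma \ref{lemma5}: one gets $\frac12 D^+|x_0(t)-\xi|^2=-M|x_0(t)-\xi|$, hence $|x_0(t)-\xi|\le|x_0(0)-\xi|-Mt$ on $[0,t_0]$ with $t_0=|x_0(0)-\xi|/M$. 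This proves the finite-time reachability $x_0(t_0)=\xi$ and, because $|x_0(0)-\xi|\le\tr$ and the distance is nonincreasing, also gives $|x_0(t)-\xi|\le\tr$ for every $t\ge0$.

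For the remaining estimate \eqref{passaggio1} on the followers I would reproduce the barrier argument of Lemma \ref{lemma4.1}, but centred at $\xi$ rather than at the origin. Fix $\epsilon>0$, set $\tr_\epsilon:=\tr+\epsilon$ and $\mathcal{T}_\epsilon:=\{t\ge0:\ \max_{i\in\{1,\dots,N\}}|x_i(s)-\xi|<\tr_\epsilon\ \forall\, s\in[0,t)\}$, which is nonempty by continuity, and suppose for contradiction that $T_\epsilon:=\sup\mathcal{T}_\epsilon<\infty$. Assuming as usual that the maximum is attained at $i$, I would differentiate along \eqref{ds}, insert $\pm\xi$ in each inner product, and use $\langle x_j(s)-\xi,x_i(t)-\xi\rangle\le|x_j(s)-\xi|\,|x_i(t)-\xi|$ together with the bound $|x_j(s)-\xi|\le\tr_\epsilon$ valid for all $s\in[t-\tau(t),t]$. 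The outcome, after discarding the $-|x_i(t)-\xi|^2$ terms, is $\frac12 D^+|x_i(t)-\xi|^2\le(1+\gamma)\,|x_i(t)-\xi|\,(\tr_\epsilon-|x_i(t)-\xi|)$, whence $D^+|x_i(t)-\xi|\le(1+\gamma)(\tr_\epsilon-|x_i(t)-\xi|)$. A Gronwall estimate then yields $|x_i(t)-\xi|<\tr_\epsilon$ for all $t<T_\epsilon$, so $\limsup_{t\to T_\epsilon^-}\max_i|x_i(t)-\xi|<\tr_\epsilon$, contradicting the definition of $T_\epsilon$; sending $\epsilon\to0$ gives \eqref{passaggio1}.

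The only point requiring slightly more care than in the pointwise case is the bookkeeping of the integral weights. Here I would use that, by the change of variables $s\mapsto t-s$, the normalizing factor satisfies $h(t)=\int_{t-\tau(t)}^t\beta(t-s)\,ds$, so that the constant kernels $\frac{1}{Nh(t)}\sum_{j\neq i}\int\beta\, a_{ij}(t;s)\,ds$ and $\frac{\gamma}{h(t)}\int\beta\,\phi\,ds$ are bounded by $\frac{N-1}{N}\le1$ and by $\gamma$ respectively, using $a_{ij}\le1$ and $\phi\le\phi(0)=1$; this is exactly what produces the coefficient $1+\gamma$ above. The second thing to check is that $|x_j(s)-\xi|\le\tr_\epsilon$ really holds on the whole delay window: for $s\ge0$ this follows from $s\in[0,t)\subset\mathcal{T}_\epsilon$, while for $s\in[-\bar\tau,0)$ it follows from the definition of $\tr$ applied to the initial data. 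Neither point is a genuine obstacle, so I expect the proof to go through as a direct distributed-delay analogue of Lemmas \ref{lemma5} and \ref{lemma4.1}.
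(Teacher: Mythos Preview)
Your proposal is correct and matches the paper's approach. The paper does not give an explicit proof of Lemma~\ref{lemma5.2}; it simply states that the result can be proved ``analogously to the pointwise time delay case'', and your argument is precisely that analogue: the same control \eqref{control2} for the leader (whose equation is unchanged), and the $\epsilon$-barrier argument of Lemmas~\ref{lemma5}/\ref{lemma4.1} recentred at $\xi$, with the integral weights absorbed via $a_{ij}\le 1$, $\phi\le 1$ and $\int_{t-\tau(t)}^t\beta(t-s)\,ds=h(t)$ to produce the same coefficient $1+\gamma$.
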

Finally, following the same argument as in Theorem \ref{teorema2}, one can prove the  local controllability result for system \eqref{ds}.
\begin{Theorem}
Let $\{x_i\}_{i\in\{0,1,...,N\}}$ be the solution to \eqref{ds}-\eqref{initial_data_dd}. If $\bar\tau$ satisfies inequality \eqref{delay_restr} and \eqref{delta_ip} holds true, then for every $\bar{x} \in \RR^d$ there exists a control $u(\cdot):[0,+\infty)\to\RR^d$ such that
$$
\lim_{t\to +\infty} x_i(t)=\bar{x},
$$
for any $i\in\{0,1,...,N\}$. Hence, we have local controllability for system \eqref{ds}.
\end{Theorem}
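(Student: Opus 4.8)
The plan is to mirror the proof of Theorem~\ref{teorema2}, replacing the pointwise-delay ingredients by their distributed-delay counterparts. First I would invoke the consensus Theorem~\ref{teorema3}: under the restriction \eqref{delay_restr} on $\bar\tau$, the control \eqref{control3} drives every agent exponentially towards a common state $x^*\in\RR^d$, by \eqref{tesi_teo3}. This provides a starting node $z_0:=x^*$ from which to reach $\bar x$. I would then interpolate between $x^*$ and $\bar x$ by a finite chain $\{z_k\}_{k=0}^{L}$ with $z_0=x^*$, $z_L=\bar x$ and $|z_k-z_{k+1}|\le \delta/4$ for all $k$; the small step size $\delta/4$ is exactly what will allow the contraction lemma to apply at each stage.

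Next, I would run the same two-phase loop as in Theorem~\ref{teorema2}. In the \emph{transport phase}, having arranged that the whole history of the group lies within $\delta/4$ of $z_k$ on a window $[t_k-\bar\tau,t_k]$, I would switch on the steering control $u(t)=M\,(z_{k+1}-x_0(t))/|z_{k+1}-x_0(t)|$ of Lemma~\ref{lemma5.2}. That lemma guarantees that $x_0$ reaches $z_{k+1}$ in finite time and, through the containment estimate \eqref{passaggio1}, that no agent leaves the ball of radius $\delta/2$ about $z_{k+1}$ during the motion (here one uses that the initial spread about $z_{k+1}$ is at most $\delta/4+\delta/4=\delta/2$). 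In the \emph{relaxation phase}, I would set $u\equiv0$ and apply Lemma~\ref{lemma5.1}: since \eqref{delta_ip} holds and the group lies within $\delta/2$ of the now-stationary leader $z_{k+1}$, estimate \eqref{tesi_lemma5.1} yields exponential contraction $|x_i(t)-z_{k+1}|\le Ce^{-Kt}$, so after a sufficiently long wait the entire history on a full delay window is again within $\delta/4$ of $z_{k+1}$. This restores the hypothesis needed to repeat the loop with $k+1$ in place of $k$.

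Iterating this loop $L$ times brings the leader to $z_L=\bar x$ with the group within $\delta/4$ of $\bar x$; a final application of Lemma~\ref{lemma5.1} with $u\equiv0$ then forces $x_i(t)\to\bar x$ for every $i\in\{1,\dots,N\}$, while $x_0\equiv\bar x$ thereafter, giving \eqref{iph}. The delicate point, exactly as in the pointwise case, is the interface between the two phases: because of the delay, Lemma~\ref{lemma5.1} requires the spread to be small not merely at a single instant but on a whole interval of length $\bar\tau$, so one must run the relaxation phase long enough for the exponential decay from \eqref{tesi_lemma5.1} to push the \emph{entire} window below $\delta/4$ before steering resumes. Checking that the distributed-delay containment \eqref{passaggio1} and the distributed Halanay estimate behind \eqref{tesi_lemma5.1} are compatible across these handovers — in particular that the admissibility bound $\|u\|_{L^\infty}\le M$ is respected throughout — is the main thing to verify; the remaining steps are the routine repetition of the argument of Theorem~\ref{teorema2}.
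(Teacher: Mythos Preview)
Your proposal is correct and follows exactly the approach the paper indicates: the paper simply states that ``following the same argument as in Theorem~\ref{teorema2}, one can prove the local controllability result for system~\eqref{ds}'', and your outline does precisely that, substituting Theorem~\ref{teorema3}, Lemma~\ref{lemma5.1}, and Lemma~\ref{lemma5.2} for their pointwise-delay counterparts. Your write-up is in fact more detailed than what the paper provides, and the care you take over the delay-window handover is appropriate.
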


\section{Numerical simulations}\label{numeric}
In this section we give some numerical tests illustrating  the results of Section \ref{section_consensus} and Section \ref{section_2} for particular choices of $a(\cdot)$, $\phi(\cdot)$ and different constant-in-time time delays. We will use the Matlab function \emph{dde23} to solve numerically system \eqref{main_model}. We take $N=50$ agents with constant initial data given by
$$
x_i(t)=(-1)^i \frac{i}{50},
$$
for any $i\in\{1,...,50\}$ and $t\leq 0$. Moreover, we choose $\delta=1$, $r=2$, and $a_{ij}$ of the form
$$
a_{ij}(t)=
\begin{cases}
1, \quad \text{if} \ |x_j(t-\tau)-x_i(t)|\leq 1,\\
2-|x_j(t-\tau)-x_i(t)|\quad \text{if} \ |x_j(t-\tau)-x_i(t)|\in (1,2), \\
0 \quad \text{if} \ |x_j(t-\tau)-x_i(t)|>2,
\end{cases}
$$
for any $t\geq 0$ and $i,j\in\{1,...,50\}$. Finally, we take $\gamma=1$ and $\phi(\cdot)$ as the classical Cucker-Smale influence function, namely of the form
$$
\phi(s)= \frac{1}{(1+s^2)^{3/2}},
$$
for any $s\geq 0$. In Figure \ref{figura1} we consider the control function as in \eqref{control} and we look for solutions according to different time delays. In particular, we find different solutions corresponding to $\tau=0.5, 1, 10$ and $25$. We notice the oscillatory behavior of solutions with $\tau=10$ and $\tau=25$ once the system has reached consensus. Moreover, note that the bigger the delays are, the more the leader's opinion has to change in order to have consensus. As expected, this will take, for higher delays, a lot of time in order to bring all agents to a prefixed state $\bar x$.
\begin{figure}[t]
\centering
\includegraphics[scale=0.25]{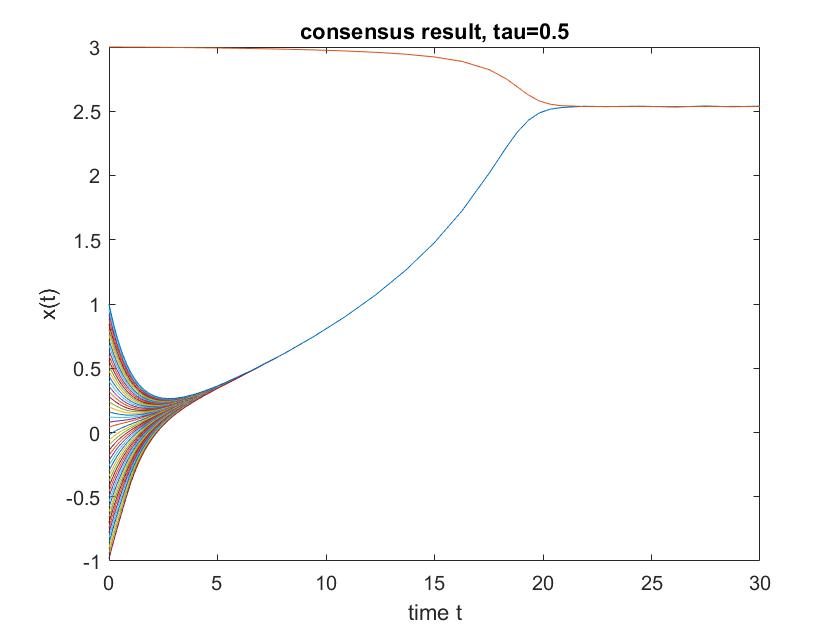}
\includegraphics[scale=0.25]{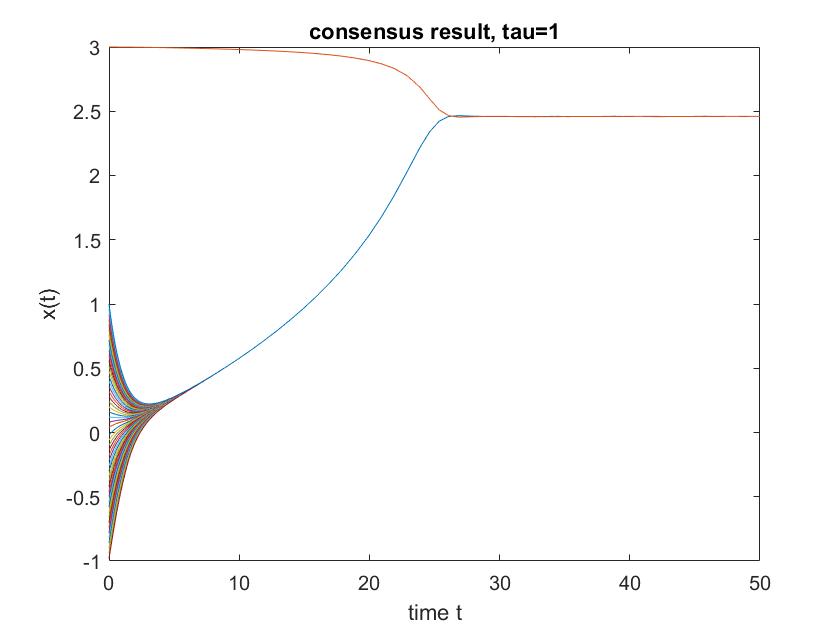}\\
\includegraphics[scale=0.25]{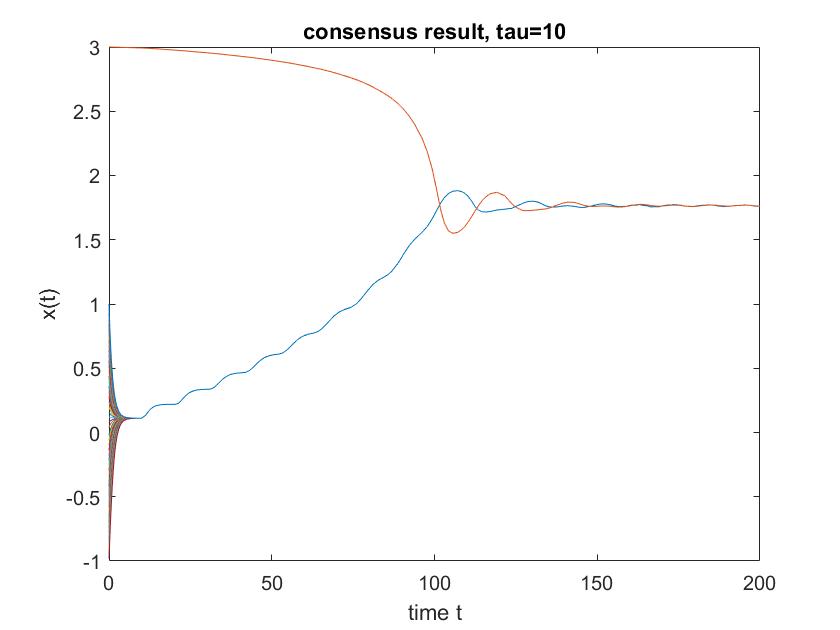}
\includegraphics[scale=0.25]{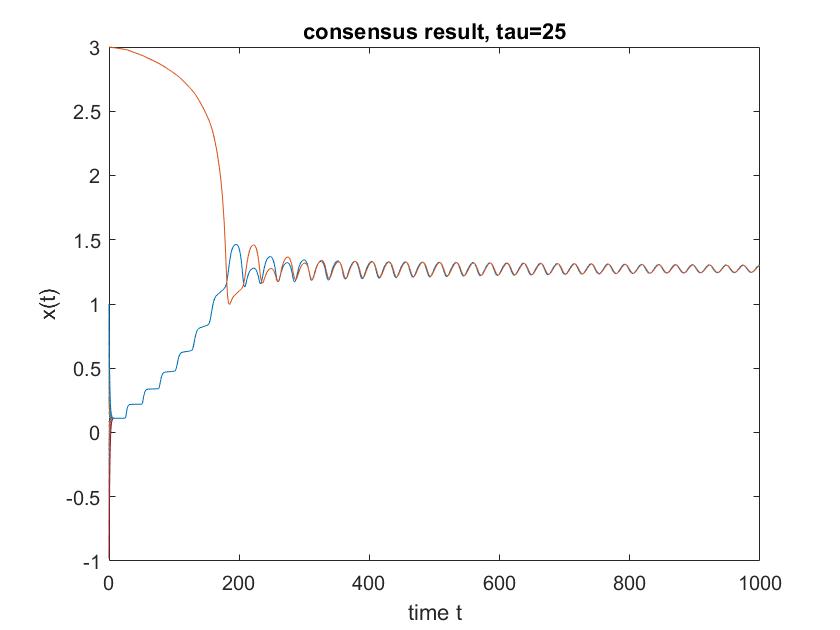}
\caption{Time evolution of solutions with different values of time delays; $\tau=0.5$ (top left), $\tau=1$ (top right), $\tau=10$ (bottom left), $\tau=25$ (bottom right), corresponding to control function $u$ as in \eqref{control}.}
\label{figura1}
\end{figure}

In Figure \ref{controllability_image} we consider $\tau(t)\equiv 1$ for any $t\geq 0$. Since condition \eqref{delta_ip} is satisfied, we can apply the proof of Theorem \ref{teorema2} to system \eqref{main_model} with $N,a(\cdot),\gamma$ and $\phi(\cdot)$ as above. We fix $\bar{x}=4$ and, according to Theorem \ref{teorema2}, we show numerically that there esixts a control function $u$ such that the leader's opinion reaches in finite time the consensus state $\bar{x}$ and for any $i\in\{1,...,50\}$,
$$
\lim_{t\to +\infty} x_i(t)=\bar x.
$$
Hence, we obtain the local controllability result.

\begin{figure}[t]
\centering
\includegraphics[scale=0.5]{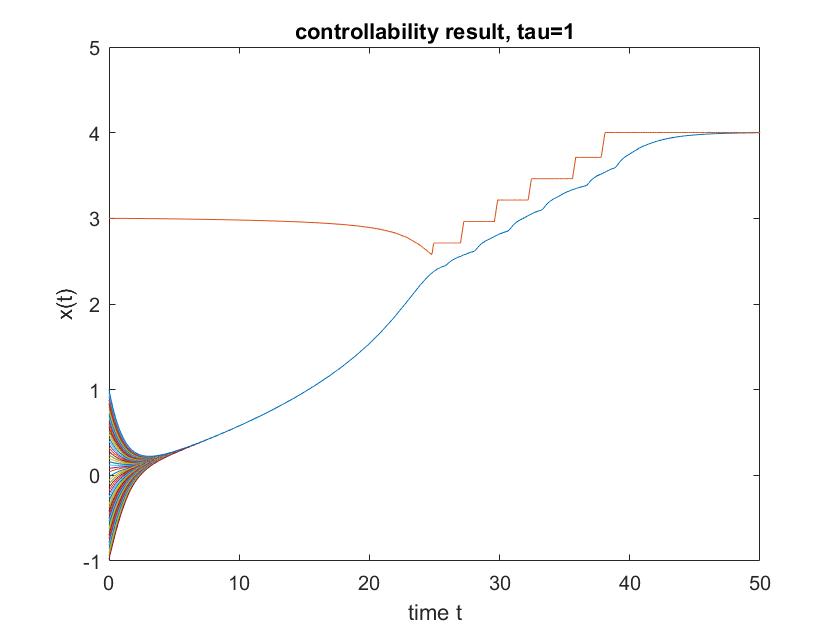}
\caption{Local controllability result applied to system \eqref{main_model} with $\tau=1$.}
\label{controllability_image}
\end{figure}

\end{document}